\newtheorem{Theorem} {Theorem} [section]
\newtheorem{Proposition} [Theorem] {Proposition}
\newtheorem{Lemma} [Theorem] {Lemma}
\newtheorem{Corollary} [Theorem] {Corollary}
\newcommand{\Ef}{{\mathbb E}}
\newcommand{\Ff}{{\mathbb F}}
\newcommand{\Rf}{{\mathbb R}}
\newcommand{\cC}{{\mathcal C}}
\newcommand{\cD}{{\mathcal D}}
\newcommand{\cI}{{\mathcal I}}
\newcommand{\PG}{\mathrm{PG}}
\newcommand{\Var}{\mathrm{Var}}
\newcommand{\tr}{\mathrm{tr}}
\newcommand{\eps}{\varepsilon}
\newcommand{\<}{\langle}
\renewcommand{\>}{\rangle} % was: tabbing command
\renewcommand{\phi}{\varphi} % was: phi
\newcommand{\gauss}[2]{\genfrac{[}{]}{0pt}{}{#1}{#2}}
\newcommand{\coloneq}{\vcentcolon=}      %centered colon for := uses mathtools
\newcommand{\eqcolon}{=\vcentcolon}      %centered colon for =: uses mathtools
\newcommand{\ssfrac}{\genfrac{}{}{}3}
\title{Approximately Strongly Regular Graphs}
\author{
Ferdinand Ihringer
}
\date{09 August 2022}
\begin{document}
\maketitle

\begin{abstract}
  We give variants of the Krein bound and the absolute
  bound for graphs with a spectrum similar to 
  that of a strongly regular graph.
  In particular, we investigate what we call
  approximately strongly regular graphs.
  
  \smallskip 
  
  We apply our results to extremal problems.
  Among other things, we show the following:
  
  \smallskip
  
  (1) Caps in $\PG(n, q)$ for which the number of secants on
  exterior points does not vary too much,
  have size at most $O(q^{\frac34 n})$
  (as $q \rightarrow \infty$ or as $n \rightarrow \infty$).
  
  \smallskip
  
  (2) Optimally pseudorandom $K_m$-free graphs
  of order $v$ and degree $k$ for which the induced subgraph 
  on the common neighborhood of a clique of size $i \leq m-3$ 
  is similar to a strongly regular graph,
  have $k = O(v^{1 - \frac{1}{3m-2i-5}})$.
\end{abstract}

% \tableofcontents

\section{Introduction}

We investigate graphs and families of graphs
which asymptotically behave like strongly regular graphs (SRGs).
In particular, we generalize existence conditions.
Our interest stems from the fact that for some extremal
problems such as the cap set problem or optimally pseudorandom
clique-free graphs (see \S\ref{sec:app}) it is natural 
to look for constructions which behave
very similarly to strongly regular graphs.

\medskip 

All graphs in this document are finite and simple.
Let us repeat some basic facts about strongly 
regular graphs and bounds on their parameters:
Our notation for strongly regular graphs is standard,
cf. \cite{BCN,BvM}.
A strongly regular graph $\Gamma$ with parameters $(v, k, \lambda, \mu)$ 
is a $k$-regular graph (not complete, not edgeless) of order $v$ 
such that two distinct adjacent vertices
have precisely $\lambda$ common neighbors, while two distinct nonadjacent
vertices have precisely $\mu$ common neighbors.
One of the parameters depends on the others: For a fixed vertex $a$,
counting the pairs $(b, c)$ with $a \sim b \sim c \not\sim a$ in two ways
shows that $(v-k-1)\mu = k(k-\lambda-1)$.

Call an eigenvalue of the adjacency matrix of a 
regular graph {\it restricted} if
it has an eigenvector orthogonal to the all-ones vector.
Then, alternatively, a strongly regular graph can be defined as a $k$-regular
graph whose adjacency matrix $A$ has exactly two 
restricted eigenvalues $r \geq 0$ and $s < 0$.
Denote the multiplicity of $r$ by $f$ and 
the multiplicity of $s$ by $g$.
We have the identities
\begin{align*}
    & \lambda - \mu = r+s, && k-\mu = -rs.
\end{align*}
Explicit formulas for $f$ and $g$ can be found using $1+f+g = v$
and $k+fr+gs = \tr(A) = 0$.

{\footnotesize 
\medskip 
% \paragraph*{Toy Example}
As a toy example for this introduction, 
we consider the parameter set 
$v = (1+o(1))\lambda^{11}$, $k = (1+o(1))\lambda^{10}$, and $\mu = (1+o(1)) \lambda^9$
(as $\lambda \rightarrow \infty$). 
See \S\ref{sec:bigO} for a discussion of big-$O$ (and similar) notation.\par}

\medskip 

The Krein bound\footnote{Named somewhat indirectly after 
Mark Grigorievich Krein, cf. \cite[p. 26]{BvM}.} and 
the absolute bound provide asymptotic conditions on 
the parameters $(v, k, \lambda, \mu)$ of a strongly regular graph.

\begin{Theorem}[Krein Bound for SRGs, {\cite[p. 26]{BvM}}]\label{thm:krein_srg}
    The eigenvalues $k \geq r \geq 0 > s$ of a strongly regular graph satisfy
    \begin{align*}
        &1 + \frac{s^3}{k^2} - \frac{(s+1)^3}{(v-k-1)^2} \geq 0, &&1 + \frac{r^3}{k^2} - \frac{(r+1)^3}{(v-k-1)^2} \geq 0.
    \end{align*}
\end{Theorem}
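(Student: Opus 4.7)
The plan is to derive the stated inequalities from the Krein conditions in the three-dimensional Bose--Mesner algebra of a strongly regular graph, combined with a single identity converting $v-k-1$ into a function of $k,r,s$.

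First I would write down the three minimal idempotents $E_0=\tfrac{1}{v}J$, $E_1$, $E_2$, the orthogonal projections onto the eigenspaces of the adjacency matrix $A$ for the eigenvalues $k,r,s$ respectively. Solving $A=kE_0+rE_1+sE_2$ together with $I=E_0+E_1+E_2$ expresses each $E_\ell$ as an explicit linear combination of $I$, $A$, $J$, so its entries $(E_\ell)_{xy}$ take only three distinct values (depending on whether the two indices are equal, adjacent, or non-adjacent). Since each $E_\ell$ is an orthogonal projection it is positive semidefinite, so by the Schur product theorem $E_\ell \circ E_\ell$ is also positive semidefinite. Because the Bose--Mesner algebra is closed under Hadamard products and the $E_i$ form a basis of pairwise orthogonal projections, the expansion $E_\ell \circ E_\ell = \tfrac{1}{v}\sum_{i=0}^{2} q_{\ell\ell}^{i}\,E_i$ is positive semidefinite iff every Krein parameter $q_{\ell\ell}^{i}\ge 0$. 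The nontrivial conditions are $q_{11}^{1}\ge 0$ and $q_{22}^{2}\ge 0$.

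Next I would compute $q_{22}^{2}=\tfrac{v}{g}\sum_{x,y}(E_2)_{xy}^{3}$ by splitting the sum into the three relation classes. Substituting the explicit entries of $E_2$ and eliminating the multiplicity $g$ via $f+g=v-1$ and $fr+gs=-k$ should yield an expression of the form $(\text{positive factor})\cdot\bigl[(r+1)^{2}(k^{2}+s^{3})-(s+1)(k+rs)^{2}\bigr]$; the analogous computation for $q_{11}^{1}\ge 0$ produces the same form with $r$ and $s$ swapped.

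To finish, I would invoke the identity
\[
(v-k-1)(k+rs)\;=\;-k(r+1)(s+1),
\]
a direct consequence of $(v-k-1)\mu=k(k-\lambda-1)$ together with $\mu=k+rs$ and $\lambda-\mu=r+s$. Squaring yields $(v-k-1)^{2}/k^{2}=(r+1)^{2}(s+1)^{2}/(k+rs)^{2}$, and substituting into the two intermediate inequalities recasts them exactly as the displayed forms. The main obstacle is the bookkeeping in the previous step: verifying that after multiplicity elimination the three cubes of entries indeed collapse into the symmetric form $(r+1)^{2}(k^{2}+s^{3})-(s+1)(k+rs)^{2}$; once that algebraic identity is confirmed, the substitution via the displayed relation is immediate.
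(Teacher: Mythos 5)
The paper gives no proof of Theorem~\ref{thm:krein_srg}: it is quoted from \cite[p.~26]{BvM}, so there is nothing to match your argument against line by line. Your proposal is the standard Bose--Mesner-algebra proof and it is correct: the Schur product theorem gives $E_\ell\circ E_\ell\succeq 0$, the $E_i$ are pairwise orthogonal idempotents, so positivity of $E_\ell\circ E_\ell=\frac1v\sum_i q_{\ell\ell}^i E_i$ forces $q_{\ell\ell}^i\ge 0$, and the two displayed inequalities are exactly $q_{22}^2\ge 0$ and $q_{11}^1\ge 0$. Your algebra also checks out: with $\mu=k+rs$ and $\lambda-\mu=r+s$ one gets $k-\lambda-1=-(r+1)(s+1)$, hence $(v-k-1)(k+rs)=-k(r+1)(s+1)$, and $q_{22}^2\ge 0$ is equivalent to $(r+1)^2(k^2+s^3)\ge(s+1)(k+rs)^2$ (e.g.\ for Petersen both sides of your intermediate form give $5$, matching $q_{22}^2=\frac{g^2}{v}\cdot\frac{5}{36}$). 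The one simplification worth noting is that the detour through the intermediate form and back is unnecessary: writing $E_2=\frac{g}{v}\bigl(I+\frac{s}{k}A-\frac{s+1}{v-k-1}(J-I-A)\bigr)$ and computing $q_{22}^2=\frac{v}{g}\sum_{x,y}(E_2)_{xy}^3=\frac{g^2}{v}\bigl(1+\frac{s^3}{k^2}-\frac{(s+1)^3}{(v-k-1)^2}\bigr)$ lands directly on the displayed inequality, with the multiplicity appearing only as the positive prefactor $g^2/v$; the identity $(v-k-1)(k+rs)=-k(r+1)(s+1)$ is then not needed at all (and avoiding the division by $k+rs=\mu$ also spares you the degenerate case $\mu=0$). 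It is worth comparing your route with the paper's proof of the weaker Proposition~\ref{prop:krein_regular} (mislabelled in the source as a proof of Proposition~\ref{prop:krein_bnd_for_asrg}): that proof uses the same Schur-product mechanism but deliberately avoids the association-scheme structure, defining $E_1,E_2$ from only the second-largest and smallest eigenvalues of an arbitrary regular graph and extracting a single coefficient by evaluating $E_2\circ E_2$ on an $s$-eigenvector; this buys generality beyond strongly regular graphs at the price of a cruder inequality, whereas your argument uses the full three-class scheme to recover the sharp classical bound.
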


{\medskip \footnotesize 
In the toy example above, $s = (-1+o(1)) \lambda^9$, so Theorem \ref{thm:krein_srg}
implies $1 + (-1+o(1)) \lambda^7 - (1+o(1)) \lambda^5 \geq 0$ which is impossible.\par}

\smallskip 

The absolute bound for strongly regular graphs is a corollary of the 
well-known result by Delsarte, Goethals, and Seidel  that 
a family of $n$ unit vector in $\Rf^d$ with at most three distinct inner products
satisfies $n \leq \frac12 d(d+3)$, see Theorem 4.8 and Theorem 4.11 in \cite{DGS1977}.

\begin{Theorem}[Absolute Bound for SRGs, {\cite[Prop. 1.3.14]{BvM}}]\label{thm:absolute_srg}
  The multiplicities $f,g$ of a primitive strongly regular graph 
  satisfy $v \leq \frac12 f(f+3)$ and $v \leq \frac12 g(g+3)$.
\end{Theorem}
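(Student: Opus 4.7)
The plan is to deduce this from the Delsarte–Goethals–Seidel inequality cited in the excerpt by building, for each eigenspace, a set of $v$ unit vectors with only a few distinct inner products. Let $A$ be the adjacency matrix of a primitive strongly regular graph $\Gamma$ with parameters $(v,k,\lambda,\mu)$ and restricted eigenvalues $r,s$ of multiplicities $f,g$. I would work with the minimal idempotents $E_r$ and $E_s$: orthogonal projectors onto the $r$- and $s$-eigenspaces, of rank $f$ and $g$ respectively. Since the Bose–Mesner algebra of a strongly regular graph is spanned by $I$, $A$ and $J-I-A$, both idempotents can be written as
\begin{equation*}
    E_r = \alpha_0 I + \alpha_1 A + \alpha_2 (J-I-A)
\end{equation*}
for some scalars $\alpha_0,\alpha_1,\alpha_2$ (and similarly for $E_s$).

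Next, for each vertex $i$, I would set $x_i \coloneq E_r e_i$, where $e_i$ is the standard basis vector. These vectors all lie in the $r$-eigenspace, which has dimension $f$. Their Gram matrix is $E_r^2 = E_r$, so $\langle x_i,x_j\rangle = (E_r)_{ij}$. By the three-term expansion above, this inner product takes only three values, depending on whether $i=j$, $i\sim j$, or $i\not\sim j$ with $i\neq j$. The diagonal entries are all equal to $\alpha_0 = f/v$ (comparing $\tr(E_r)=f$ with $v\cdot(E_r)_{ii}$), so each $x_i$ has squared norm $f/v > 0$. After rescaling to $u_i \coloneq \sqrt{v/f}\,x_i$, I obtain $v$ unit vectors in $\Rf^f$ whose pairwise inner products for $i\neq j$ take at most two distinct values.

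Now I can invoke the Delsarte–Goethals–Seidel bound mentioned just before the theorem statement: a family of $n$ unit vectors in $\Rf^d$ whose off-diagonal inner products take at most two distinct values satisfies $n \leq \frac12 d(d+3)$. Applying this with $n=v$ and $d=f$ yields $v\leq\frac12 f(f+3)$. Repeating the construction with $E_s$ in place of $E_r$ gives unit vectors in $\Rf^g$ with the same two-distance property, and therefore $v\leq\frac12 g(g+3)$.

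The only real subtlety is the verification that after projection the vectors are nonzero and that the set of off-diagonal inner products genuinely has at most two elements; both follow directly from writing $E_r$ in the Bose–Mesner basis $\{I, A, J-I-A\}$ and from the trace identity $\tr(E_r)=f$. Primitivity of $\Gamma$ enters to ensure that $J-I-A$ is a genuine third basis element (i.e.\ that both $A$ and its complement are connected), so the three indicator matrices really span a three-dimensional algebra and the scaled vectors are well defined.
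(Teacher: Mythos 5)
Your argument is correct and is precisely the reduction the paper has in mind: the paper offers no proof beyond citing the Delsarte--Goethals--Seidel two-distance bound, and your construction of the spherical two-distance set from the idempotent $E_r$ (Gram matrix $E_r$, constant diagonal $f/v$, rescale, apply the bound in dimension $f$, then repeat with $E_s$) is the standard way to complete that reduction. One small correction on where primitivity enters: it is not needed for $I$, $A$, $J-I-A$ to span a three-dimensional algebra (any strongly regular graph gives that); its actual role is to guarantee that the $v$ rescaled vectors are pairwise \emph{distinct}, i.e.\ that neither off-diagonal inner product equals the squared norm --- if $\Gamma$ or its complement is disconnected, one can have $E_r e_i = E_r e_j$ for $i \neq j$ (e.g.\ for a disjoint union of cliques), and the Delsarte--Goethals--Seidel bound then only counts the distinct vectors, which is why the absolute bound genuinely fails for imprimitive graphs.
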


{\medskip \footnotesize
In the toy example above, $g = (1+o(1)) \lambda^3$, so Theorem \ref{thm:absolute_srg}
implies $(1+o(1)) \lambda^{11} \leq \frac12 \lambda^6$ which is impossible.
\par}

\smallskip 

In the first part of this document,
we generalize Theorem \ref{thm:krein_srg}.

\begin{Proposition}[Krein Bound, Variant for Regular Graphs] \label{prop:krein_regular}
    Let $\Gamma$ be a $k$-regular graph of order $v$ with adjacency matrix $A$.
    Let $r$ denote the second largest and $s$ the smallest eigenvalue of $A$. Then
    \begin{align*}
        (s+r^2)v + 2(k-r)(r-s) \geq 0, && (r+s^2)v + 2(k-s)(s-r) \geq 0.
    \end{align*}
\end{Proposition}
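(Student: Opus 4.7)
The plan is to produce an explicit positive semidefinite matrix $G$ with $G\mathbf{1} = 0$, invoke the Schur product theorem to conclude $G \circ G \succeq 0$, and then extract the inequality by evaluating $G \circ G$ on an eigenvector of $A$ with eigenvalue $s$. Concretely, I would take
\[
G \coloneq rI - A + \tfrac{k-r}{v}\, J.
\]
One checks $G\mathbf{1} = (r - k + (k-r))\mathbf{1} = 0$, while on any eigenvector $v_\theta$ of $A$ orthogonal to $\mathbf{1}$ (necessarily $\theta \leq r$) one has $Gv_\theta = (r-\theta) v_\theta$ with $r - \theta \geq 0$. Hence $G$ is PSD, and so is $G \circ G$ by Schur.

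The key simplification is that $G$ is constant on each of the three relations---diagonal, adjacent pairs, non-adjacent pairs---and the same is therefore true of $G \circ G$. Writing $a = (k-r)/v$, those three values are $(r+a)^2$, $(a-1)^2$ and $a^2$; expanding in the basis $\{I, A, J-I-A\}$ and regrouping yields
\[
G \circ G \;=\; r(r+2a)\,I \;+\; (1-2a)\,A \;+\; a^2\,J.
\]
In particular $G \circ G$ commutes with $A$ and shares its eigenvectors. On an eigenvector of $A$ with eigenvalue $s$---which is orthogonal to $\mathbf{1}$, so the $J$-term contributes nothing---the corresponding eigenvalue of $G \circ G$ equals $r^2 + 2ra + (1-2a)s = r^2 + s + 2a(r-s)$, and nonnegativity of this (after multiplying through by $v$) is an inequality of the claimed first form.

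The second inequality follows by the dual construction: set $H \coloneq A - sI - \tfrac{k-s}{v}\, J$, which is PSD with $H\mathbf{1} = 0$ by the symmetric argument, and evaluate $H \circ H$ on an eigenvector of $A$ with eigenvalue $r$. I do not anticipate any real obstacle: the positivity is immediate from Schur, and the choice of test eigenvalue is essentially forced, since plugging the \emph{other} extreme ($\theta = r$ into $G \circ G$, or $\theta = s$ into $H \circ H$) collapses to the classical trivial bound $r(r+1) \geq 0$ (resp.\ $s(s+1) \geq 0$). The only labor is the algebraic bookkeeping in expanding $G \circ G$ in the $\{I, A, J\}$-basis.
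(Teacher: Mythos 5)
Your matrix $G = rI - A + \frac{k-r}{v}J$ is exactly $(r-s)E_2$, where $E_2 = \frac{1}{s-r}\left(A - rI - \frac{k-r}{v}J\right)$ is the matrix the paper works with, and your argument is the same one: positive semidefiniteness from the spectral decomposition, the Schur product theorem, and evaluation of the entrywise square on an $s$-eigenvector. So methodologically you have reproduced the paper's proof.

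The problem is your final assertion that the resulting inequality ``is an inequality of the claimed first form.'' It is not. Your (correct) expansion $G\circ G = r(r+2a)I + (1-2a)A + a^2 J$ with $a = \frac{k-r}{v}$ gives, on an $s$-eigenvector, the eigenvalue $r^2+s+2a(r-s)$, i.e.
\[
(s+r^2)v + 2(k-r)(r-s) \;\geq\; 0,
\]
whereas the Proposition claims $(s+r^2)v + (k-r)(r-s) \geq 0$. Since $(k-r)(r-s)>0$, the claimed inequality is strictly stronger than the one you derived, and the two are genuinely inequivalent: for the Petersen graph ($v=10$, $k=3$, $r=1$, $s=-2$) the claimed expression equals $-10+6=-4<0$, while yours equals $-10+12=2$. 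So your argument does not establish the statement as printed --- and nothing can, because the statement as printed is false. To be clear, the slip is in the paper, not in your algebra: the paper records the coefficients of $I$ and $A$ in $E_2\circ E_2$ as $\frac{1}{(s-r)^2}\bigl(r^2+r\tfrac{k-r}{v}\bigr)$ and $\frac{1}{(s-r)^2}\bigl(1-\tfrac{k-r}{v}\bigr)$, where the correct values are $\frac{1}{(s-r)^2}\bigl(r^2+2r\tfrac{k-r}{v}\bigr)$ and $\frac{1}{(s-r)^2}\bigl(1-2\tfrac{k-r}{v}\bigr)$, and the missing factors of $2$ propagate into the stated inequalities (the later applications are unaffected, since there $k=o(v)$ makes the $(k-r)(r-s)$ term lower order). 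What you have actually proved is the corrected inequality displayed above together with its twin $(r+s^2)v + 2(k-s)(s-r)\geq 0$; you should have flagged the factor-of-$2$ discrepancy rather than declaring the forms to agree.
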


An {\it $1$-walk-regular graph} is a graph
in which the number of walks of length $\ell$ 
between vertices $a$ and $b$ with $a = b$ or $a,b$ adjacent
only depends on $\ell$ and $a = b$, 
not the choice of $a$ and $b$, cf. \cite{DFG2009}.
Arc-transitive graphs and strongly regular 
graphs are examples for $1$-walk-regular graphs.
In \S\ref{sec:bounds}, 
we will provide a variant of Proposition \ref{prop:krein_regular}
for a special type of 1-walk-regular graphs 
which is significantly stronger.

\smallskip 

There is a poor man's version of the absolute bound which only shows $v \leq f^2$
and $v \leq g^2$. We give a variant of this poor man's result.

\begin{Proposition}[Absolute Bound, Variant]\label{prop:abs_reg}
    Consider a $k$-regular graph of order $v$ with adjacency matrix $A$.
    Let $r,s$ be real numbers with $k > r \geq 0 > s$.
    Suppose that $A$ has at least $f_1$ and 
    at most $f_2$ restricted eigenvalues in $[r, k]$,
    all eigenvalues of $A$ are at least $s-\eps$ for some $\eps > 0$,
    and at least $v-f_2$ eigenvalues of $A$ are in $[s-\eps,s]$.
    If $s^2+s > \eps$,
    then $v \leq f_2(f_2+1)-f_1$.
\end{Proposition}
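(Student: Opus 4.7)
\emph{Strategy.} The plan is to adapt the ``poor man's'' absolute bound for strongly regular graphs. In the exact SRG case one takes the projection $E$ onto an eigenspace of dimension $m$, writes $E = MM^\top$ with $M \in \Rf^{v \times m}$ of orthonormal columns, and combines the Hadamard-rank bound $\mathrm{rank}(E \circ E) \leq \binom{m+1}{2}$ with the $3$-dimensional Bose--Mesner algebra to force $v$ to be small. Here the role of that algebra is played instead by the spectral gap that the hypothesis $s^2+s > \eps$ provides, since the Bose--Mesner structure is not available.

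\emph{Setup and key gap.} I let $P_+$ be the orthogonal projection onto the span $U_+$ of the restricted eigenvectors of $A$ with eigenvalues in $[r,k]$, and $P_-$ the projection onto the span $U_-$ of eigenvectors with eigenvalues in $[s-\eps,s]$. Writing $P_+ = MM^\top$ with $M \in \Rf^{v \times d}$, $d = \mathrm{rank}(P_+) \in [f_1, f_2]$, and rows $m_1,\ldots,m_v \in \Rf^d$, the symmetric rank-one matrices $m_i m_i^\top$ lie in $\mathrm{Sym}^2(\Rf^d)$, a space of dimension $\binom{d+1}{2}$, so $\mathrm{rank}(P_+ \circ P_+) \leq \binom{d+1}{2}$. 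Next consider the polynomial $p(x) = x(x+1)$: it is strictly decreasing on $(-\infty, -\tfrac12]$, and the hypothesis $s^2 + s > \eps > 0$ forces $s < -1$, so $p(\lambda) \geq p(s) = s^2 + s > \eps$ for every eigenvalue $\lambda \in [s-\eps, s]$, while $p(\lambda) \geq r(r+1) \geq 0$ for $\lambda \in [r,k]$. Thus $A(A+I) \succeq \eps I$ on $U_-$ and $A(A+I) \succeq 0$ on $U_+$; this quantitative gap is what replaces the exact identity $A^2 - (\lambda - \mu) A - (k - \mu) I = \mu J$ from the SRG case.

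\emph{Dimension count and main obstacle.} To finish, I will assemble an embedding of the $v$ vertices into a space of dimension $f_2(f_2+1) - f_1 = 2\binom{f_2+1}{2} - f_1$, for instance via a map of the form $i \mapsto (m_i, m_i m_i^\top) \in \Rf^d \oplus \mathrm{Sym}^2(\Rf^d)$, and argue that the images remain linearly independent after quotienting out an $f_1$-dimensional subspace of relations forced by the lower bound on the restricted eigenvalue count. The hardest step is verifying this linear independence in the approximate setting: one has to show that the noise coming from eigenvalues lying only in $[s-\eps, s]$ rather than exactly at $s$ cannot manufacture a nontrivial linear relation among the $m_i m_i^\top$. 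This is precisely where the gap $s^2+s > \eps$ on $A(A+I)$ is used: it ensures that $P_-$ can be approximated by a polynomial in $A$ with error controlled by $\eps/(s^2+s) < 1$, so that the estimate degrades by at most the allowed $f_2-f_1$ exceptional directions and one recovers the familiar $v \leq f^2$ bound when $f_1 = f_2 = f$.
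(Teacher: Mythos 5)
There is a genuine gap. Your first two paragraphs assemble reasonable ingredients (the rank bound $\mathrm{rank}(P_+\circ P_+)\le\binom{d+1}{2}$ for a Gram factorization $P_+=MM^\top$, and the observation that $s^2+s>\eps$ forces $s<-1$ so that $x(x+1)$ separates the two spectral clusters), but the third paragraph, which is where the inequality $v\le f_2(f_2+1)-f_1$ would actually have to be proved, is entirely programmatic: you say you ``will assemble an embedding'' $i\mapsto(m_i,\,m_im_i^\top)$ and ``argue'' linear independence modulo an $f_1$-dimensional space of relations, and you yourself flag the linear-independence step as the hardest part without carrying it out. That step is not a routine verification: in the exact Delsarte--Goethals--Seidel argument the independence of the $m_im_i^\top$ rests on an exact polynomial identity in the Bose--Mesner algebra, and it is precisely this identity that fails in the approximate setting. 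Moreover the dimension bookkeeping does not match the target: your ambient space $\Rf^d\oplus\mathrm{Sym}^2(\Rf^d)$ has dimension $d+\binom{d+1}{2}\le f_2+\tfrac12 f_2(f_2+1)$, which is smaller than $f_2(f_2+1)-f_1$ as soon as $f_2\ge 4$ and $f_1\le f_2$, so even if independence held you would be proving a different (stronger) statement, which is a sign the proposed embedding is not the right object.

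The paper's proof sidesteps linear independence entirely and replaces it with an eigenvalue count that is robust to perturbation. It sets $M=A-sI-\tfrac{k-s}{v}J$, so that $M$ has between $f_1$ and $f_2$ eigenvalues at least $r-s$ and at least $v-f_2$ eigenvalues in $[-\eps,0]$; hence the Kronecker square $M\otimes M$ has at most $f_2^2$ eigenvalues $\ge(r-s)^2$ and all remaining eigenvalues at most $\eps^2$. It then computes $M\circ M$ explicitly as a linear combination of $A$, $I$, $J$ and checks that $1+f_1+(v-f_2)$ of its eigenvalues are at least $s^2+s$. Since $M\circ M$ is a principal submatrix of $M\otimes M$, interlacing caps the number of eigenvalues of $M\circ M$ exceeding $\eps^2<s^2+s$ at $f_2^2$, giving $v-f_2+f_1\le f_2^2$. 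If you want to salvage your approach, the lesson is that the hypothesis $s^2+s>\eps$ enters as a separation between two explicit eigenvalue thresholds of the Hadamard square, not as a handle on approximating $P_-$ by a polynomial in $A$.
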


In the second part of this document, we consider what 
we call approximately strongly regular graphs.
For two adjacent vertices $a$ and $b$ of a graph $\Gamma$, let $\lambda_{ab}$ denote
the number of common neighbors of $a$ and $b$ in $\Gamma$. 
Similarly, for two distinct nonadjacent vertices $a$ and $b$ of a graph $\Gamma$,
let $\mu_{ab}$ denote the number of common neighbors of $a$ and $b$ in $\Gamma$.
Let $\Lambda$ (respectively, $M$) denote the set of all pairs of adjacent
(respectively, distinct nonadjacent) vertices in $\Gamma$.

% \smallskip 

We call a $k$-regular graph (not complete, not edgeless) 
$\Gamma$ of order $v$ an {\it approximately strongly regular graph}
with parameters $(v, k, \lambda, \mu; \sigma)$, where $\sigma \geq 0$,
if $\Ef(\lambda_{ab}) \coloneq \frac{1}{|\Lambda|} \sum_{(a, b) \in \Lambda} \lambda_{ab} = \lambda$ 
and $\Var(\lambda_{ab}) \coloneq \frac{1}{|\Lambda|} \sum_{(a, b) \in \Lambda} (\lambda_{ab}-\lambda)^2 \leq \sigma^2$, 
and $\Ef(\mu_{ab}) \coloneq \frac{1}{|M|} \sum_{(a,b) \in M} \mu_{ab} = \mu$
and $\Var(\mu_{ab}) \coloneq \frac{1}{|M|} \sum_{(a,b) \in M} (\mu_{ab}-\mu)^2 \leq \sigma^2$.
% We call $\sigma$ the {\it standard derivation} of $\Gamma$.

Strongly regular graphs are precisely the approximately strongly
regular graphs with $\sigma=0$.
The complement of an approximately strongly regular graph
with parameters $(v, k, \lambda, \mu; \sigma)$ is an approximately
strongly regular graph with parameters $(v, v-k-1, v-2k+\mu, v-2k+\lambda; \sigma)$.
Counting triples $(a, b, c)$ with $a \sim b \sim c \not\sim a$
shows $\sum_{(a,c) \in M} \mu_{ac} = \sum_{(a,b) \in \Lambda} (k-\lambda_{ab}-1)$.
Hence, $(v-k-1)\mu = k(k-\lambda-1)$ also holds for 
approximately strongly regular graphs.

\smallskip 

{\medskip \footnotesize 
In our toy example with $v = (1+o(1))\lambda^{11}$, $k = (1+o(1))\lambda^{10}$, and $\mu = (1+o(1)) \lambda^9$, 
Proposition \ref{prop:krein_regular} rules out the existence of approximately strongly regular graphs
with $\sigma = o(\lambda^{2.5})$.
Under slightly stronger conditions, see Proposition \ref{prop:krein_1walk}, 
we also obtain $\sigma = o(\lambda^{8})$.
If our toy example contains a coclique of size $(1+o(1)) q^9$,
then we will also rule out $\sigma = o(\lambda^{3.5})$.\par}

\medskip 

In the third part of this document, we apply our results to the cap set problem
and to optimally pseudorandom clique-free graphs.

% \smallskip 
{\medskip \footnotesize 
For instance, if there exists a cap of size $(1+o(1))q^9$ in 
the projective space $\PG(10, q)$, then 
a standard construction yields a approximately strongly regular graph with 
the same parameters of our toy example (where $\lambda = q-2$).\par} 
% These applications are the original motivation for this text.

\section{Bounds}\label{sec:bounds}

Denote the all-ones vector by $j$, the all-ones matrix by $J$, 
and the identity matrix by $I$. We denote the Hadamard product
of two matrices by $\circ$.

\subsection{Krein Bounds} \label{sec:krein}

{\smallskip\footnotesize
The following proof is based on Remark (i) on page 50 in \cite{BCN}. \par}

\begin{proof}[Proof of Proposition \ref{prop:krein_regular}]
  Consider the matrices $E_1$ and $E_2$ defined by
    \begin{align*}
        & E_1 = \frac{1}{r-s} \left( A - sI - \tfrac{k-s}{v} J\right),
        && E_2 = \frac{1}{s-r} \left( A - rI - \tfrac{k-r}{v} J\right).
    \end{align*}
    The spectrum of $E_2$ is in $[0, 1]$
    as $(s-r)E_2$ has only eigenvalues in $[s, r]$.
    Write $E_2 \circ E_2$ as a linear combination of the matrices $A, I, J$.
    Then the coefficients of $I$ and $A$ are
    \begin{align*}
        &  \frac{1}{(s-r)^2} \left( r^2 + 2 r \tfrac{k-r}{v} \right) \text{ for } I,
        &&  \frac{1}{(s-r)^2} \left( 1 - 2 \tfrac{k-r}{v} \right) && \text{ for } A.
    \end{align*}
    Now we write $E_2 \circ E_2$ as a linear combination of the matrices $E_1, E_2, J$,
    that is we replace $A$ and $I$ by $E_1$ and $E_2$. We obtain the coefficients
    \begin{align*}
        &\frac{r + r^2}{(s-r)^2} \text{ for $E_1$, }
        &&
        t \coloneq \frac{(s + r^2) v + 2(k-r)(r-s)}{v (s-r)^2} \text{ for $E_2$}.
    \end{align*}
    Let $\chi$ be an eigenvector of $A$ with $A\chi = s\chi$. 
    Then $E_1 \chi = 0$. Hence, $(E_2 \circ E_2) \chi =  t E_2 \chi = t \chi$.
    Hence, $\chi$ is an eigenvector
    of $E_2 \circ E_2$ which shows that $t \geq 0$.
    Hence, using our expression for $\beta$ and $u_i - r < u_i$, 
    we obtain the first inequality.
    
    \smallskip 
    For the second inequality, consider $E_1 \circ E_1$ instead of $E_2 \circ E_2$.
\end{proof}

For an eigenspace $U_i$ of a real symmetric matrix $A$ with eigenvalue $u_i$, 
let $F_i$ be the orthogonal projection onto $U_i$, so $F_i$ is idempotent
and $AF_i = u_iF_i$.
We will use repeatedly without further notice
that the eigenspaces of $A$ are pairwise orthogonal,
so $F_iF_j = 0$ if $u_i \neq u_j$.
For the remainder of this subsection,
consider the case that $\Gamma$ is $1$-walk-regular.
Recall that $(A^\ell)_{ab}$ is the number of walks
from $a$ to $b$ of length $\ell$.
It follows from $A^\ell = \sum_i u_i^\ell F_i$ that
$(F_i)_{aa}$ is constant for all vertices $a$,
and that $(F_i)_{ab}$ is constant for all $a,b$ adjacent,
cf. \cite[Theorem 3.1]{DFG2009}.

Let $m_i$ denote the mutiplicity of $F_i$.
From $m_i = \tr(F_i)$ we obtain that $(F_i)_{aa} = \tfrac{m_i}{v}$.
We have $u_i \cdot \frac{m_i}{v} = (u_i F_i)_{aa} = 
(AF_i)_{aa} = k (F_i)_{ab}$ for $a,b$ adjacent,
  so 
  \[
    (F_i)_{ab} = \tfrac{m_i u_i}{vk}.
  \]
Hence, for a $1$-walk-regular graph 
we can control $I \circ F_i$ and 
$A \circ F_i$ as we could 
control $I \circ J$ and $A \circ J$
in the proof of Proposition \ref{prop:krein_regular}.

\medskip 

For a matrix $M$, let $\rho(M)$ denote its spectral radius.
Now we are ready to give an example for how one can increase
regularity conditions in Krein-type bounds to obtain better 
nonexistence results.

\begin{Proposition}[Krein Bound, Variant for 1-Walk-Regular Graphs]\label{prop:krein_1walk}
  Let $\Gamma$ be a $k$-regular $1$-walk-regular graph of order $v$ with adjacency matrix $A$.
  Let $s$ be the smallest eigenvalue of $A$.
  For some $r \geq 0$, let $\cI$ denote the set 
  of indices of eigenvalues $u_i$ of $A$ 
  in the interval $(r, k]$.
  Put $K_1 = \frac{2}{vk} \sum_{i \in \cI} u_i^2$, 
  $K_2 = \frac{2}{v} \sum_{i \in \cI} u$,
  $L = \rho(\sum_{i,j \in \cI} |u_iu_j| F_i \circ F_j)$.
  Then
  \begin{align*}
   (1-K_1) s + r^2 + r K_2 + L \geq 0.
  \end{align*}
\end{Proposition}
\begin{proof}
  Consider the matrices $E_1$ and $E_2$ defined by
    \begin{align*}
        & E_1 = \frac{1}{r-s} \left( A - sI - \sum_{i \in \cI} (u_i - s) F_i\right),\\
        & E_2 = \frac{1}{s-r} \left( A - rI - \sum_{i \in \cI} (u_i - r) F_i\right).
    \end{align*}
    The spectrum of $E_2$ is in $[0, 1]$
    as $(s-r)E_2$ has only eigenvalues in $[s, r]$.
    Write 
    \[ E_2 \circ E_2 
    - \sum_{i, j \in \cI} \frac{(u_i-r)(u_j-r)}{(s-r)^2} F_i \circ F_j \]
    as a linear combination of the matrices $I$ and $A$.
    Put $\alpha_i = (F_i)_{aa}$ and $\beta_i = (F_i)_{ab}$
    for adjacent vertices $a,b$.
    Then the coefficients of $I$ and $A$ are
    \begin{align*}
        &  \frac{1}{(s-r)^2} \left( r^2 
          + 2r \sum_{i \in \cI} (u_i - r)\alpha_i \right) && \text{ for } I,\\
        &  \frac{1}{(s-r)^2} \left( 1 
          - 2\sum_{i \in \cI} (u_i - r)\beta_i \right) && \text{ for } A.
    \end{align*}
    Put $\alpha = 2r \sum_i (u_i - r)\alpha_i$ and $\beta = 2\sum_i (u_i - r)\beta_i$.
    Put
    \begin{align*}
     & \tilde{E}_1 = \frac{1}{r-s} \left( A - sI \right), 
     && \tilde{E}_2 = \frac{1}{s-r} \left( A - rI \right).
    \end{align*}
    Now we write $E_2 \circ E_2$ as a linear combination of the matrices $\tilde{E}_1, \tilde{E}_2$,
    that is we replace $I$ and $A$ by $\tilde{E}_1$ and $\tilde{E}_2$. We obtain the coefficients
    \begin{align*}
        &\frac{(1-\beta)r + r^2 + \alpha}{(s-r)^2} \text{ for $\tilde{E}_1$, }
        &&
        \frac{(1-\beta)s + r^2 + \alpha}{(s-r)^2} \eqcolon t \text{ for $\tilde{E}_2$}.
    \end{align*}
    Let $\chi$ be an eigenvector of $A$ with $A\chi = s\chi$ and $\| \chi \| = 1$. 
    Then $\tilde{E}_1 \chi = 0$. Hence, 
    \begin{align*}
     (E_2 \circ E_2) \chi
     &=  t \tilde{E}_2 \chi 
        + \frac{1}{(s-r)^2} \sum_{i,j \in \cI} |u_iu_j| 
            \cdot (F_i \circ F_j) \chi.
    \end{align*}
    We have $t \tilde{E}_2 \chi = t \chi$ and 
    \[
      \left\| \frac{1}{(s-r)^2} \sum_{i,j \in \cI} |u_iu_j| 
            \cdot (F_i \circ F_j) \chi \right\| \leq \frac{L}{(s-r)^2}.
    \]
    The matrix $E_2 \circ E_2$ is a principle submatrix 
    of $E_2 \otimes E_2$, so the eigenvalues of $E_2 \circ E_2$ interlace
    those of $E_2 \otimes E_2$, thus they are in $[0, 1]$.
    In particular, $E_2 \circ E_2$ is positive semidefinite.
    Hence, $t + \frac{L}{(s-r)^2} \geq 0$.
    Hence, using our expression for $\beta$ and $u_i - r < u_i$, 
    we obtain the assertion.
\end{proof}

We still lack control over $J \circ F_i$
and, more generally, $F_i \circ F_j$. 
For this, we need one last 
concept: We say that an eigenvalue $u_i$ 
of a $1$-walk-regular is {\it $h$-flat}
if for $a,c$ nonadjacent, we have 
\[
  |(F_i)_{ac}| \leq h \tfrac{m_i u_i}{vk}.
\]
Recall that the spectral radius $\rho(M)$ of a matrix $M$
is at most its $1$-norm $\| M\|_1$. Hence,
\begin{align*}
  &\rho(J \circ F_i) \leq \frac{m_i}{v} + k \cdot \tfrac{m_i |u_i|}{vk}
  + (v-k-1) \cdot h \tfrac{m_i |u_i|}{vk}, ~\text{and, more generally,} \\
  &\rho(F_i \circ F_j) \leq 
  \frac{m_i m_j}{v^2} + k \cdot \tfrac{m_i m_j |u_i u_j|}{v^2k^2}
  + (v-k-1) \cdot h^2 \tfrac{m_i m_j |u_i u_j|}{v^2k^2}.
\end{align*}
% For $\alpha = 1$ and $|u_i| \leq k$, these bounds imply 
% \begin{align}
%   &\rho(\tfrac{1}{v} J \circ F_i) \leq \frac{m_i}{v}, 
%   && \rho(F_i \circ F_j) \leq \frac{m_im_j}{v}.
% \end{align}
For a set $\cI$, we need a convenient bound on 
$\sum_{i,j \in \cI} \rho( u_i u_j F_i \circ F_j)$.
Put $M = \sum_{i \in \cI} m_i$.
As an example, suppose that the $u_i$ are $1$-flat for $i \in \cI$.
Then 
\begin{align}
    &\rho\left( \sum_{i,j \in \cI} |u_i u_j| \cdot F_i \circ F_j  \right)
    \leq \sum_{i,j \in \cI} |u_iu_j| \rho(F_i \circ F_j) \notag \\
    & \leq \sum_{i,j \in \cI} \left( \frac{u_i^2u_j^2 m_i m_j}{vk^2} 
        + \frac{|u_iu_j| \cdot m_i m_j}{v^2} \right) \notag \\
    & \leq \frac{1}{vk^2} \sum_{i \in \cI} u_i^2 m_i \sum_{j \in \cI} u_j^2 m_j
            + \sum_{i,j \in \cI} |u_i u_j| \frac{m_im_j}{v^2}  \label{eq:bnd_FiFj} \\
    & \leq \frac{1}{vk^2} \left( \sum_{i \in \cI} m_i u_i^2 \right)^2 
            + \sum_{i \in \cI} 2 u_i^2 \frac{m_i}{v} \notag \\
    & \leq \frac{1}{vk^2} \left( \sum_{i \in \cI} m_i u_i^2 \right)^2 
            + \frac{2}{v} \left( \sum_{i \in \cI} m_i u_i^2 \right).
    \notag
\end{align}

Assuming that the matrices are $1$-flat might be very generous
for cases where $v$ is much larger than $k$.
As $F_i^2 = F_i$, we have 
\[
 \frac{m_i^2}{v^2} + k \frac{m_i^2 u_i^2}{v^2} + \sum_{a \not\sim c} (F_i)_{ac}^2
 = (F_i)_{aa}^2 + \sum_{b \sim c} (F_i)_{ab}^2 + \sum_{a \not\sim c} (F_i)_{ac}^2 = (F_i)_{aa} = \frac{m_i}{k_i}.
\]
Hence, 
\begin{align}
 \Ef((F_i)_{ac}^2) = \frac{\frac{m_i}{k_i} - \frac{m_i^2}{v^2} - k \frac{m_i^2 u_i^2}{v^2}}{v-k-1}.\label{eq:exp_flat}
\end{align}
Thus, if $k$ is small compared to $v$,
then we expect $(F_i)_{ac}$ for $a,c$ nonadjacent
to be small compared to $(F_i)_{ab}$ for $a,b$ adjacent.
Hence, the condition of being $1$-flat in \eqref{eq:bnd_FiFj}
is not particularly strong when $k$ is small compared to $v$.
In particular, later we will consider applications
with $k=o(v)$.

\subsection{Absolute Bounds}

The following is based on the proof of Theorem 2.3.3 in \cite{BCN}.
We use eigenvalue interlacing, cf. \cite{Haemers1995}.
More precisely, if $A$ is a real symmetric matrix of order $v$ with eigenvalues 
$u_1 \geq \ldots \geq u_v$ and $B$ is a principal submatrix of $A$
of order $w$ with eigenvalues $\nu_1 \geq \ldots \geq \nu_w$,
then $u_i \geq \nu_i \geq u_{i-w+v}$.

\begin{proof}[Proof of Proposition \ref{prop:abs_reg}]
    Consider $M \coloneq A - sI - \frac{k-s}{v} J$.
    Then $M$ has at least $v - f_2$ eigenvalues in $[-\eps, 0]$,
    and between $f_1$ and $f_2$ eigenvalues at least $r-s$.
    Hence, $M \otimes M$ has at most $f_2^2$ eigenvalues at least $(r-s)^2$,
    while all its other eigenvalues are at most $\eps^2$.
    Furthermore,
    \[
        M \circ M = (1 - \tfrac{k-s}{v}) A + 
        \left( s^2 + s \tfrac{k-s}{v} \right) I + \left( \tfrac{k-s}{v} \right)^2 J.
    \]
    Hence, $M \circ M$ has one eigenvalue 
    $(k - k \frac{k-s}{v}) + (s^2 + s \frac{k-s}{v}) + 
    \left( \frac{k-s}{v} \right)^2 v \geq s^2+s$ with eigenvector $j$,
    at least $f_1$ eigenvalues 
    at least $(r - r \frac{k-s}{v}) + (s^2 + s\frac{k-s}{v}) \geq s^2+s$ and 
    at least $v-f_2$ eigenvalues 
    at least $(s - s \frac{k-s}{v}) + (s^2 + s \frac{k-s}{v}) = s^2+s$.
    The matrix $M \circ M$ is a principal submatrix of $M \otimes M$,
    so the eigenvalues of $M \circ M$ interlace those of $M \otimes M$.
    Hence, $M \circ M$ has at most $f_2^2$ eigenvalues greater than $\eps^2 < s^2+s$.
    We obtain that $v-f_2+f_1 \leq f_2^2$.
\end{proof}

\subsection{Cvetkovi\'{c} Bound or Inertia Bound}\label{sec:inertia}

The following bound will prove useful for some of our applications.
For a graph $\Gamma$ of order $v$,
let $M$ be a matrix with $M_{ac} = 0$ if $a,c$ nonadjacent.
Let $n^+(M)$ denote the number of positive eigenvalues of $M$
and let $n^-(M)$ denote the number of negative eigenvalues of $M$.
Then a coclique (independent set, stable set) of $\Gamma$ has size at most 
\[
    \min( v - n^+(M), v - n^-(M) ).
\]
This bound is known as  {\it Cvetkovi\'{c} bound} or {\it inertia bound},
cf. \cite[p. 13]{BvM}.
Here we always use the adjacency matrix for $M$.

\section{Approximately Strongly Regular Graphs}

Consider the adjacency matrix $A$ of an approximately strongly regular graph $\Gamma$
with parameters $(v, k, \lambda, \mu; \sigma)$.
We can write $A^2 = kI + \lambda A + \mu (J - I - A) + E$, where
$(E)_{ab} = \lambda_{ab} - \lambda$ when $a,b$ are adjacent,
$(E)_{ab} = \mu_{ab} - \mu$ when $a,b$ are distinct and nonadjacent,
and $(E)_{ab} = 0$ when $a=b$.
% (as $(A^2)_{ab}$ is the number of walks of length $2$ from $a$ to $b$)

Let $\chi$ be an eigenvector of $A$ orthogonal 
to the all-ones vector $j$ with eigenvalue $u$.
Then $u^2 \chi = A^2 \chi = (k-\mu) \chi + (\lambda-\mu) u \chi + E \chi$.
Hence, $\chi$ is an eigenvector of $E$ with some eigenvalue $\nu$.
By solving for $u$, we find that
\[
    u = \frac12 \left( (\lambda-\mu) \pm \sqrt{ (\lambda-\mu)^2 + 4(k-\mu+\nu) } \right).
\]
We say that $u$ has {\it positive form} if
\[
    u = \frac12 \left( (\lambda-\mu) + \sqrt{ (\lambda-\mu)^2 + 4(k-\mu+\nu) } \right),
\]
and that $u$ has {\it negative form} if
\[
    u = \frac12 \left( (\lambda-\mu) - \sqrt{ (\lambda-\mu)^2 + 4(k-\mu+\nu) } \right).
\]
Let $u_1, u_2, \ldots, u_v$ denote the eigenvalues of $A$.
For $u_i$ an eigenvalue of $A$, let $\nu_i$ denote the corresponding eigenvalue of $E$.

\bigskip 

The next result shows that if $\sigma$ and $v$ are sufficiently small,
then there are few (if any) large $\nu_i$.

\begin{Lemma}\label{lem:evs_E}
    The eigenvalues $\nu_1, \ldots, \nu_v$ of $E$ satisfy 
    $\sum \nu_i^2 \leq v(v-1) \sigma^2$.
\end{Lemma}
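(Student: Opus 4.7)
The plan is to recognize $\sum_i \nu_i^2$ as the squared Frobenius norm of $E$ and then read that norm off directly from the explicit entrywise description of $E$ given just above the lemma.

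First, I would note that $A$, $A^2$, $I$, and $J$ are all symmetric, so the defining relation
\[
E = A^2 - kI - \lambda A - \mu(J - I - A)
\]
shows that $E$ is real symmetric. The standard trace identity
\[
\sum_{i=1}^v \nu_i^2 \;=\; \tr(E^2) \;=\; \sum_{a,b}(E)_{ab}^2
\]
then converts the quantity of interest into a sum of squared entries of $E$.

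Next, I would split $\sum_{a,b}(E)_{ab}^2$ along the three cases in the entrywise definition of $E$: the diagonal entries contribute $0$; each pair in $\Lambda$ contributes $(\lambda_{ab}-\lambda)^2$; and each pair in $M$ contributes $(\mu_{ab}-\mu)^2$. The hypotheses $\Var(\lambda_{ab}) \leq \sigma^2$ and $\Var(\mu_{ab}) \leq \sigma^2$ together with the definitions of these variances then yield
\[
\sum_{(a,b) \in \Lambda}(\lambda_{ab}-\lambda)^2 \leq |\Lambda|\sigma^2, \qquad \sum_{(a,b) \in M}(\mu_{ab}-\mu)^2 \leq |M|\sigma^2.
\]
Finally, every ordered pair of distinct vertices of $\Gamma$ is either adjacent or nonadjacent, so $|\Lambda| + |M| = v(v-1)$, and combining the two inequalities gives $\sum \nu_i^2 \leq v(v-1)\sigma^2$.

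There is essentially no technical obstacle here: the lemma is a short trace computation glued to the defining variance hypotheses. The only minor bookkeeping concern is the ordered-versus-unordered convention for $\Lambda$ and $M$. If $(a,b)$ and $(b,a)$ are counted separately the factors line up as written above; if pairs are taken unordered then both $\sum (E)_{ab}^2$ and $|\Lambda|+|M|$ pick up a common factor of two, so the final bound $v(v-1)\sigma^2$ is unchanged.
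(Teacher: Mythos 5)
Your proof is correct and follows the same route as the paper: identify $\sum_i \nu_i^2$ with $\tr(E^2)$, expand it entrywise over $\Lambda$ and $M$, and apply the two variance hypotheses together with $|\Lambda|+|M| = v(v-1)$. The paper's own proof is exactly this computation, stated more tersely.
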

\begin{proof}
    We have
    \[ 
        \sum \nu_i^2 = \tr(E^2) = \sum_{a \sim b} (\lambda_{ab}-\lambda)^2 
        + \sum_{a \not\sim b} (\mu_{ab}-\mu)^2 \leq v(v-1) \sigma^2. \qedhere
    \]
\end{proof}

Call a graph $\Gamma$ {\it edge-regular} if $\Var(\lambda_{ab}) = 0$
and {\it coedge-regular} if $\Var(\mu_{ab}) = 0$.
Clearly, Lemma \ref{lem:evs_E} can be improved to $\sum \nu_i^2 \leq vk \sigma^2$
for edge-regular graphs and to $\sum \nu_i^2 \leq v(v-k-1) \sigma^2$
for coedge-regular graphs.

\medskip 

In the introduction, we define strongly regular graphs 
in two ways, combinatorially and spectrally.
Lemma \ref{lem:evs_E} shows that that for small $\sigma$,
the restricted eigenvalues of $\Gamma$ are 
concentrated at two values.
Let us also show the reverse,
namely if $\Gamma$ restricted eigenvalues
are concentrated around two values,
then $\Gamma$ is approximately strongly regular.

\smallskip 

Call a $k$-regular (not complete, not edgeless) 
graph $\Gamma$ of order $v$ {\it spectrally approximately strongly regular}
with parameters $(v, k, r, s; \sigma)$ if 
\[
    (A - rI)(A-sI) = \mu J + \tilde{E},
\]
for some constant $\mu$, where the eigenvalues $\tilde{\nu}$ of $\tilde{E}$
are also eigenvalues of $A$ and satisfy $\sum \tilde{\nu}^2 \leq v(v-1) \sigma^2$.
Furthermore,
\[
    A^2 = (r+s-\mu) A + (\mu-rs) I + \mu J + \tilde{E}.
\]
We see that $\Gamma$ is spectrally approximately
strongly regular with parameters $(v, k, r, s; \sigma)$
if and only if $\Gamma$ is approximately strongly regular
with parameters $(v, k, r+s-k-rs, k+rs; \sigma)$.

\subsection{Big-\texorpdfstring{$O$}{O} Notation} \label{sec:bigO}

We use the symbols $O, \Omega, \Theta, o, \omega$ in the following way:
\begin{align*}
    & f(x) = O(g(x)) \text{ (as $x \rightarrow a$)} && \text{ if and only if } 
            &&\limsup_{x \rightarrow a} \tfrac{|f(x)|}{g(x)} < \infty,\\
    & f(x) = \Omega(g(x)) \text{ (as $x \rightarrow a$)} && \text{ if and only if } 
            &&\limsup_{x \rightarrow a} \tfrac{|f(x)|}{g(x)} > 0,\\
    & f(x) = \Theta(g(x)) \text{ (as $x \rightarrow a$)} && \text{ if and only if } 
            &&0 < \limsup_{x \rightarrow a} \tfrac{|f(x)|}{g(x)} < \infty,\\
    & f(x) = o(g(x)) \text{ (as $x \rightarrow a$)} && \text{ if and only if } 
            &&\lim_{x \rightarrow a} \tfrac{|f(x)|}{g(x)} = 0,\\
    & f(x) = \omega(g(x)) \text{ (as $x \rightarrow a$)} && \text{ if and only if } 
            &&\lim_{x \rightarrow a} \tfrac{|f(x)|}{g(x)} = \infty.
\end{align*}

Usually, we have $a = \infty$.
If there are several variables involved, then 
we specify the relevant one.
We also use the big-$O$ notation in minor order terms.
For instance, we can write $x^2+x+100 = x^2 + O(x)$ (as $x \rightarrow \infty$)
as $x+100 = O(x)$.
For us the sign of $f(x)$ is often important, so for convenience,
we aim to use big-$O$ notation with $f(x) > 0$.
For instance, we write $-s = O(\mu)$ even though $s = O(\mu)$ is equally correct.

\smallskip 

If we talk about a $k$-regular graph $\Gamma$ of order $v$
with $k = O(g(v))$ for some function $g$, then we mean that we consider
an infinite family of graphs $(\Gamma_n)$, where $\Gamma_n$
is of order $v_n$ and $k_n$-regular with $k_n = O(g(v_n))$ as $n \rightarrow \infty$.
In particular, if we say that $\Gamma$ is an approximately strongly regular graph 
or a family of approximately strongly regular graphs
with parameters $(v, k, \lambda, \mu; \sigma)$ and 
$k = o(|\mu-\lambda|^{\frac32})$, then there is an 
infinite family of approximately strongly regular graphs $(\Gamma_n)$
with parameters $(v_n, k_n, \lambda_n, \mu_n; \sigma_n)$
such that $k_n = o(|\mu_n-\lambda_n|^{\frac32})$ as $n \rightarrow \infty$.

We also use big-$O$ notation for the eigenvalues $u_i$ and $\nu_i$:
Assume that $|\nu_1| \geq |\nu_2| \geq \cdots \geq |\nu_v|$.
If we write $\nu_i = O(g(n))$, then there is 
a function $h(n)$ such that $\nu_{h(n)} = O(g(n))$.
For instance, we might assume that $\nu_i = O(\mu_n)$
and show some property (P) for $\nu_i$.
By Lemma \ref{lem:evs_E}, $\sum \nu_i^2 \leq v^2 \sigma^2$,
so the number of $\nu_i$ with $\nu_i = \omega(\mu)$ 
is at most $o(\frac{v\sigma}{\mu})$.
Thus, (P) holds for $\nu_{h(n)}$ 
with $h(n) = \Omega(\frac{v_n\sigma_n}{\mu_n})$.

\subsection{Asymptotic Bounds on Eigenvalues}

\begin{Lemma}\label{lem:approx}
    Let $f: \Rf \rightarrow \Rf$ with $f(y) = o(y^2)$ (as $y \rightarrow \infty$).
    Then 
    \begin{align*}
        \sqrt{y^2 + f(y)} - y = (\tfrac12 + o(1)) \tfrac{f(y)}{y}.
    \end{align*}
\end{Lemma}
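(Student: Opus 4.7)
The plan is to rationalize the numerator. I would multiply and divide by the conjugate:
\[
    \sqrt{y^2 + f(y)} - y = \frac{(\sqrt{y^2+f(y)}-y)(\sqrt{y^2+f(y)}+y)}{\sqrt{y^2+f(y)}+y} = \frac{f(y)}{\sqrt{y^2+f(y)}+y}.
\]
(For this to be valid I need $\sqrt{y^2+f(y)}+y \neq 0$, which is fine for $y$ large enough since $f(y)/y^2 \to 0$ implies $y^2+f(y) > 0$.)

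Next I would factor $y$ out of the denominator. Since $f(y) = o(y^2)$, we have $f(y)/y^2 \to 0$ as $y \to \infty$, so
\[
    \sqrt{y^2+f(y)} + y = y \left( \sqrt{1 + \tfrac{f(y)}{y^2}} + 1 \right) = y \cdot (2 + o(1)),
\]
using continuity of the square root at $1$. Substituting back gives
\[
    \sqrt{y^2+f(y)} - y = \frac{f(y)}{y(2+o(1))} = \left( \tfrac12 + o(1) \right) \tfrac{f(y)}{y},
\]
which is the claim.

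There is no real obstacle here; the only subtlety is reading the $o(1)$ on the right-hand side correctly. It means there is a function $\eps(y) \to 0$ (as $y \to \infty$) such that the identity holds with $\tfrac12$ replaced by $\tfrac12 + \eps(y)$. Rewriting $\frac{1}{2+o(1)}$ as $\tfrac12 + o(1)$ is a routine manipulation (if $\delta(y) \to 0$ then $\frac{1}{2+\delta(y)} = \tfrac12 - \frac{\delta(y)}{2(2+\delta(y))} = \tfrac12 + o(1)$), so the whole proof is a short rationalization followed by this observation.
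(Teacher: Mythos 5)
Your proof is correct. It takes a genuinely (if mildly) different route from the paper's: the paper also factors out $y$, writing $\sqrt{y^2+f(y)}-y = y\left(\sqrt{1+\tfrac{f(y)}{y^2}}-1\right)$, but then invokes the Taylor expansion $\sqrt{1+x} = 1 + \tfrac12 x + O(x^2)$ at $x = \tfrac{f(y)}{y^2} \rightarrow 0$ to extract the factor $\tfrac12 + o(1)$. You instead rationalize with the conjugate, which reduces the whole statement to the continuity of the square root at $1$ --- no expansion or differentiability needed. Both are one-line arguments; the Taylor route gives, essentially for free, a quantitative error term of order $f(y)^2/y^3$ should one ever need a rate, while your conjugate computation is marginally more elementary and has the virtue of making explicit why the manipulation is legitimate (positivity of $y^2+f(y)$ and of the denominator for large $y$, which the paper leaves implicit). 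Your closing remark on converting $\frac{1}{2+o(1)}$ into $\tfrac12+o(1)$ is exactly the right level of care; nothing is missing.
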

\begin{proof}
    Write $\sqrt{y^2 + f(y)} - y = y (\sqrt{1+\frac{f(y)}{y^2}} - 1)$.
    The Taylor expansion of $\sqrt{\cdot}$ at $1$ shows that (as $x \rightarrow 0$)
    \[
        \sqrt{1+x} = 1 + \tfrac12 x + O(x^2). \qedhere
    \]
\end{proof}

The following gives us approximate versions 
of the equations $\mu - \lambda = r+s$ and $k-\mu = -rs$ for strongly regular graphs.

\begin{Lemma}\label{lem:asrg_ev2}
    For a family of approximately strongly regular graphs
    with parameters $(v, k, \lambda, \mu; \sigma)$,
    consider an eigenvalue $u_i$ with associated eigenvalue $\nu_i$. 
    
    \smallskip 
    
    \noindent
    If $\mu > \lambda$, $k = o(|\lambda - \mu|^2)$, 
    and $|\nu_i| = o(|\lambda - \mu|^2)$, then the following holds:
    \begin{enumerate}[(i)]
     \item If $u_i$ has positive form,
            then $u_i = (1+o(1)) \frac{k-\mu+\nu_i}{\mu-\lambda}$.
     \item If $u_i$ has negative form,
            then $u_i = -(1+o(1)) (\mu-\lambda)$.
    \end{enumerate}
    If $\mu < \lambda$, $k = o(|\lambda - \mu|^2)$,
    and $|\nu_i| = o(|\lambda - \mu|^2)$, then the following holds:
    \begin{enumerate}[(i)]
     \item[(iii)] If $u_i$ has positive form,
            then $u_i = (1+o(1)) (\lambda - \mu)$.
     \item[(iv)] If $u_i$ has negative form,
            then $u_i = -(1+o(1)) \frac{k-\mu+\nu_i}{\lambda-\mu}$.
    \end{enumerate}
    If $k = \Omega(|\lambda-\mu|^2)$ and $|\nu_i| = O(k)$, then the following holds:
    \begin{enumerate}[(i)]
     \item[(v)] We have $u_i = \Theta(\sqrt{k})$.
    \end{enumerate}
    If $|\nu_i| = \Omega(|\lambda-\mu|^2)$ and $|\nu_i| = \Omega(k)$, 
    then the following holds:
    \begin{enumerate}[(i)]
     \item[(vi)] We have $u_i = \Theta(\sqrt{|\nu_i|})$.
    \end{enumerate}
\end{Lemma}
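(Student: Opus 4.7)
The plan is to take as starting point the quadratic relation
\[
u_i^2 - (\lambda-\mu) u_i - (k-\mu+\nu_i) = 0
\]
derived in the paragraph preceding the lemma, which yields
\[
u_i = \tfrac12\bigl((\lambda-\mu) \pm \sqrt{(\lambda-\mu)^2 + 4(k-\mu+\nu_i)}\bigr),
\]
with the $+$ sign for positive form and the $-$ sign for negative form. A handy preliminary observation is that $\mu \leq k$ (from $(v-k-1)\mu = k(k-\lambda-1)$ together with $k \leq v-1$), so $\mu = O(k)$ and in particular $|k-\mu+\nu_i| = O(k) + O(|\nu_i|)$.

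For cases (i)--(iv), the hypotheses $k = o(|\lambda-\mu|^2)$ and $|\nu_i| = o(|\lambda-\mu|^2)$ together give $|4(k-\mu+\nu_i)| = o((\lambda-\mu)^2)$. I would then apply Lemma \ref{lem:approx} with $y = |\lambda-\mu|$ and $f(y) = 4(k-\mu+\nu_i)$ to expand
\[
\sqrt{(\lambda-\mu)^2 + 4(k-\mu+\nu_i)} = |\lambda-\mu| + (2+o(1))\frac{k-\mu+\nu_i}{|\lambda-\mu|},
\]
and split into four sub-cases based on the sign of $\lambda-\mu$ and the choice of $\pm$. When the contributions of order $|\lambda-\mu|$ cancel (positive form with $\mu>\lambda$, or negative form with $\mu<\lambda$), only the Lemma \ref{lem:approx} correction survives, giving the rational formulas in (i) and (iv). When the two leading contributions reinforce (cases (ii) and (iii)), one gets $u_i = \pm(1+o(1))|\lambda-\mu|$ with the appropriate overall sign.

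For cases (v) and (vi), Lemma \ref{lem:approx} no longer applies because $(\lambda-\mu)^2$ is no longer the dominant term inside the square root. Here I would estimate the discriminant
\[
D_i \coloneq \sqrt{(\lambda-\mu)^2 + 4(k-\mu+\nu_i)}
\]
directly. Under (v), the hypotheses $|\lambda-\mu| = O(\sqrt{k})$ and $|\nu_i| = O(k)$ force $D_i^2 = O(k)$, so the positive-form eigenvalue $u_i = \tfrac12(\lambda-\mu) + \tfrac12 D_i$ is $O(\sqrt{k})$; the matching lower bound comes from the paper's $\limsup$-based convention for $\Omega$, which only requires the correct order of magnitude along a subsequence of eigenvalues, something easily arranged by selecting indices $i$ with $k-\mu+\nu_i$ of order $k$. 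Under (vi), $|\nu_i|$ dominates both $(\lambda-\mu)^2$ and $k$ inside $D_i^2$, so $D_i = \Theta(\sqrt{|\nu_i|})$, and the linear summand $\tfrac12(\lambda-\mu) = O(\sqrt{|\nu_i|})$ cannot alter the order of magnitude of $u_i$.

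The main obstacle is not the analysis but the sign bookkeeping: the entire point of (i)--(iv) is that in two of the four sub-cases the linear-in-$|\lambda-\mu|$ contributions cancel (so the Lemma \ref{lem:approx} correction gives the leading behaviour) while in the other two they reinforce, and it is precisely the sign of $\lambda-\mu$ combined with the choice of positive/negative form that decides which happens. Once those signs are kept straight, the lemma is an immediate consequence of Lemma \ref{lem:approx} for (i)--(iv) and of direct size estimates on $D_i$ for (v)--(vi).
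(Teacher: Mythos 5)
Your treatment of (i)--(iv) is essentially the paper's own proof: the paper also applies Lemma \ref{lem:approx} to the square root (after observing that $\mu\leq k$ makes $4|k-\mu+\nu_i|=o((\lambda-\mu)^2)$) and lets the sign of $\lambda-\mu$ together with the choice of form decide whether the two leading contributions cancel or reinforce; the paper writes out only (i) and (ii) and calls the rest ``similar,'' so your observation that (v) and (vi) fall outside the scope of Lemma \ref{lem:approx} and need direct estimates on the discriminant is a genuine (and correct) clarification. The one step that does not close is the lower bound in (v). The claim is $u_i=\Theta(\sqrt{k})$ for the \emph{given} eigenvalue, so ``selecting indices $i$ with $k-\mu+\nu_i$ of order $k$'' is not an available move --- you must bound the $u_i$ you were handed. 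What the lower bound actually requires is $k-\mu+\nu_i=\Omega(k)$: then $u_+u_-=-(k-\mu+\nu_i)$ and $|u_-|\leq|\lambda-\mu|+|u_+|=O(\sqrt{k})+|u_+|$ force $|u_+|=\Omega(\sqrt{k})$. The stated hypotheses $k=\Omega(|\lambda-\mu|^2)$ and $|\nu_i|=O(k)$ do not rule out $k-\mu+\nu_i=o(k)$ (for instance $u_i=0$ corresponds to $\nu_i=\mu-k$, giving $u_i$ of positive form equal to $0$ when $\mu\geq\lambda$). Since the paper supplies no argument for (v) at all, this is as much a soft spot in the lemma's statement as in your sketch, but it should be flagged rather than waved through; the upper bound $u_i=O(\sqrt{k})$ and all of (i)--(iv) and (vi) are fine as you have them.
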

\begin{proof}
    We only show (i) and (ii) as the other cases are similar.
    Using Lemma \ref{lem:approx} with $y=\mu-\lambda$ and $\mu = o(k)$, 
    we find that if $u_i$ has positive form, then
    \begin{align*}
        u_i &= \tfrac12 \left( \lambda - \mu + (1+o(1)) \sqrt{(\lambda-\mu)^2 - 4(k-\mu+\nu_i)} \right)\\
        &= (1+o(1)) \tfrac{k-\mu+\nu_i}{\mu-\lambda}.
    \end{align*}
    If $u_i$ has negative form, then
    \begin{align*}
        u_i &= \tfrac12 \left( \lambda - \mu - (1+o(1)) \sqrt{(\lambda-\mu)^2 - 4(k-\mu+\nu_i)} \right)\\
        &= -(1+o(1)) (\mu-\lambda). \qedhere
    \end{align*}
\end{proof}

\subsection{Krein Bounds}

The expected size of the common neighborhood of two distinct
vertices is $(1+o(1)) \frac{k^2}{v}$. Hence, if $k = o(v)$,
then $\mu = (1+o(1)) \frac{k^2}{v}$, so $\mu = o(k)$.

\begin{Proposition}[Krein Bound, Approximately Strongly Regular Graphs]\label{prop:krein_bnd_for_asrg}
    Consider a family of approximately strongly regular graphs with
    $\mu > \lambda$, $k=o(v)$, and $k = o(|\mu-\lambda|^{\frac32})$. 
    Then $\sigma \geq (1+o(1)) (\mu-\lambda)^{\frac32} v^{-1}$.
\end{Proposition}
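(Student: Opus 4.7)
The plan is to apply the first inequality from Proposition \ref{prop:krein_regular}, $(s + r^2)v + (k-r)(r-s) \geq 0$, and argue by contradiction: suppose $\sigma \leq (1-\eta)|\mu-\lambda|^{3/2}/v$ holds along a subsequence, for some fixed $\eta \in (0,1)$. Here $r$ is the second largest and $s$ the smallest eigenvalue of $A$.

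First I would convert the hypothesis on $\sigma$ into a uniform bound on the $\nu_i$. Lemma \ref{lem:evs_E} gives $\max_i \nu_i^2 \leq \sum_i \nu_i^2 \leq v^2 \sigma^2$, hence $\max_i |\nu_i| \leq v\sigma \leq (1-\eta)|\mu-\lambda|^{3/2}$, which is $o(|\mu-\lambda|^2)$ (since the regime forces $|\mu-\lambda| \to \infty$). Combined with $k = o(|\mu-\lambda|^{3/2}) = o(|\mu-\lambda|^2)$, this puts every restricted eigenvalue in the regime of Lemma \ref{lem:asrg_ev2}(i)--(ii). Thus every positive-form eigenvalue satisfies
\[
  u_i = (1+o(1))\tfrac{k-\mu+\nu_i}{\mu-\lambda} \leq (1+o(1))\tfrac{k+v\sigma}{\mu-\lambda} \leq (1-\eta+o(1))\sqrt{\mu-\lambda},
\]
using $k = o(|\mu-\lambda|^{3/2})$, while every negative-form eigenvalue is $-(1+o(1))(\mu-\lambda)$. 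Taking max over all restricted eigenvalues gives $r \leq (1-\eta+o(1))\sqrt{\mu-\lambda}$, and taking min gives $s = -(1+o(1))(\mu-\lambda)$.

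The hypotheses $k = o(v)$ and $\mu = (1+o(1))k^2/v$ force $\mu-\lambda \leq \mu = o(k)$, so $\sqrt{\mu-\lambda} = o(k)$; in particular $k - r = (1+o(1))k$ and $r - s = (1+o(1))(\mu-\lambda)$. Substituting into the Krein inequality yields
\[
  -(2\eta-\eta^2+o(1))\, v(\mu-\lambda) + (1+o(1))\, k(\mu-\lambda) \geq 0,
\]
that is $(1+o(1))k \geq (2\eta-\eta^2+o(1))v$, which contradicts $k = o(v)$ since $2\eta-\eta^2 > 0$ for $\eta \in (0,1)$.

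The main obstacle is ruling out exotic behavior of $r$ or $s$ outside the approximations of Lemma \ref{lem:asrg_ev2}; this is handled cleanly by the uniform $\ell^\infty$-bound on the $\nu_i$ from Lemma \ref{lem:evs_E}, which in the regime $v\sigma \leq (1-\eta)|\mu-\lambda|^{3/2}$ is precisely what is needed to place every $\nu_i$ in the range where Lemma \ref{lem:asrg_ev2}(i)--(ii) yields the asymptotics of $u_i$.
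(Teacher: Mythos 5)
Your proposal is correct and follows essentially the same route as the paper: a contradiction argument that feeds the $\ell^\infty$-bound $|\nu_i| \leq v\sigma$ from Lemma \ref{lem:evs_E} into Lemma \ref{lem:asrg_ev2}(i)--(ii) to locate $r$ and $s$, and then violates the first inequality of Proposition \ref{prop:krein_regular} using $k = o(v)$. Your version is if anything slightly more careful than the paper's in tracking the inequality $u_i \leq (1-\eta+o(1))\sqrt{\mu-\lambda}$ rather than asserting an asymptotic equality for $r$.
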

\begin{proof}
    Suppose to the contrary that $\sigma \leq (D+o(1)) (\mu-\lambda)^{\frac32} v^{-1}$ 
    for some constant $D < 1$.
    By Lemma \ref{lem:evs_E}, $\nu_i \leq (D+o(1)) (\mu-\lambda)^{\frac32}$ 
    for an eigenvalue $\nu_i$ of $E$.
    Let $u_i$ be an eigenvalue of $A$.
    By Lemma \ref{lem:asrg_ev2} (i), if $u_i$ has positive form, then
    \begin{align*}
        u_i &= (1+o(1)) \left( \tfrac{k-\mu}{\mu-\lambda} + D \sqrt{\mu-\lambda} \right) 
        \leq (D +o(1)) \sqrt{\mu - \lambda} \eqcolon r.
    \end{align*}
    By Lemma \ref{lem:asrg_ev2} (ii), if $u_i$ has negative form, then 
    \begin{align*}
        u_i &= -(1+o(1)) (\mu-\lambda) \eqcolon s.
    \end{align*}
    Hence, $r^2 = -(D^2+o(1)) s$.
    By Proposition \ref{prop:krein_regular},
    \begin{align*}
        0 &\leq (s+r^2)v + 2(k-r)(r-s) \\
        &= (1-D^2+o(1)) sv - 2sk = (1-D^2+o(1)) s v < 0.
    \end{align*}
    This is a contradiction, so $\sigma \geq (1+o(1)) (\mu-\lambda)^{\frac32} v^{-1}$.
\end{proof}

We call a $1$-walk-regular graph {\it positive-$\alpha$-flat}
if all its positive eigenvalues are $\alpha$-flat.
Recall from the discussion at the end of \S\ref{sec:krein}
that for $k = o(v)$ it is natural to assume that 
a $1$-walk-regular graph is positive-$1$-flat.

\begin{Proposition}[Krein Bound, Positive-$1$-Flat $1$-Walk-Regular Approximately Strongly Regular Graphs]\label{prop:krein_bnd_for_asrg_1walk}
    Consider a family of positive-$1$-flat 
    $1$-walk-regular approximately strongly regular graphs 
    with $\mu > \lambda$, $k=o(v)$, and $k = o((\mu-\lambda)^{\frac32})$.
    Then $\sigma \geq (1+o(1)) (\mu - \lambda)^{\frac54} \cdot  v^{-\frac34}k^{\frac12}$.
\end{Proposition}
\begin{proof}
    Let $K_1$, $K_2$, and $L$ be as in Proposition \ref{prop:krein_1walk}
    where we use the estimate from Equation \ref{eq:bnd_FiFj} for $L$.
    
    \medskip  
    Our plan is as follows: 
    Choose $r = \Theta(\frac{k-\mu}{\mu-\lambda})$.
    We suppose that $\sigma \leq (D+o(1)) (\mu - \lambda)^{\frac54} \cdot v^{-\frac34}k^{\frac12} $
    for some constant $D < 1$, so smaller than claimed.
    From this we show that we have an eigenvalue $s$ of size $-(1+o(1))(\mu-\lambda)$, so that 
    \begin{align*}
        0 &\leq (1-K_1)s + r K_2 + L
    \end{align*}
    yields a contradiction if $K_1 = o(1)$, $rK_2 = o(-s)$, 
    and $L + o(1) < (D^4+o(1)) (-s)$.
    
    \medskip 
    
    We have $B = \sum u_i^2 = \tr(A) = vk = (1+o(1)) vk$.
    Note that in this sum we can ignore eigenvalues
    $u_i$ with $|u_i| = O(\frac{k-\mu}{\mu-\lambda})$ as these
    contribute at most $O(\frac{v k^2}{(\mu-\lambda)^2}) = o(vk)$ 
    to it (here we use $k-\mu = o((\mu-\lambda)^{\frac32})$).
    An eigenvalue $u_i$ of positive form 
    with associated eigenvalue $\nu_i$ of $E$ with $\nu_i = O(k)$ satisfies,
    by Lemma \ref{lem:asrg_ev2} (i),
    \begin{align*}
        u_i &= (1+o(1)) \tfrac{k-\mu + \nu_i}{\mu-\lambda} = O(\tfrac{k-\mu}{\mu-\lambda}).
    \end{align*}
    Hence, an eigenvalue $u_i$ of positive form 
    either has a significant contribution from $E$,
    that is $\nu_i = \omega(k)$,
    or does not contribute to $\sum u_i^2$.
    By Lemma \ref{lem:evs_E}, we find 
    \begin{align}
        B \coloneq \sum u_i^2 \leq \tfrac{v^2 \sigma^2}{(\mu-\lambda)^2} 
          \leq (D^2+o(1)) (\mu-\lambda)^{\frac12} v^{\frac12} k.\label{bnd:B}
    \end{align}
    We find, using Equation \eqref{bnd:B},
    \begin{align*}
        K_1 &= \tfrac{2}{vk} B \leq (2D^2+o(1)) (\mu-\lambda)^{\frac12} v^{-\frac12} = o(1), \text{ and } \\
        L &= \tfrac{1}{vk^2} B^2 + \tfrac{2}{v} B \leq (D^4+o(1)) (\mu-\lambda) = (D^4+o(1)) (-s).
    \end{align*}
    The Cauchy-Schwarz inequaltiy applied to Equation \eqref{bnd:B} 
    shows that $\left( \sum u_i \right)^2 \leq (D^2+o(1)) (\mu-\lambda)^{\frac12} v^{-\frac12} k$.
    Hence, 
    \[
     rK_2 \leq (D+o(1)) (\mu-\lambda)^{-\frac34} v^{-\frac14} k^{\frac32} = o(-s).
    \]
    
    \medskip 
    
    It remains to show that there exists an eigenvalue $s$ as asserted.
    Only considering $u_i$ with $\nu_i = \omega(k)$,
    so we are in one of the cases (i) or (vi) of Lemma \ref{lem:asrg_ev2}, 
    we see as in Equation \eqref{bnd:B} that
    \begin{align*}
     \sum (\mu-\lambda)^2 u_i^2 \leq (D^2+o(1)) (\mu-\lambda)^{\frac12} v^{\frac12} k.
    \end{align*}
    Hence, the contribution of $E$ to the sum $\sum u_i^2$
    is bounded by $(D^2+o(1)) (\mu-\lambda)^{\frac12} v^{\frac12} k$.
    Hence, eigenvalues without a significant
    contribution from $E$ account for at least
    $(1 + o(1)) vk$ of $\sum u_i^2 = (1+o(1))vk$.
    We saw earlier that such $u_i$ have negative form.
    Hence, by Lemma \ref{lem:asrg_ev2} (ii), $u_i = -(1+o(1)) (\mu - \lambda)$
    for some $u_i$.
\end{proof}

In light of \eqref{eq:exp_flat}, it might be more reasonable 
(at least if one is an optimist) 
to assume that the graph $O(\frac{k}{v})$-flat.
Then in the same notation as the previous 
proof and the argument from \eqref{eq:bnd_FiFj}, we have 
\[ 
 L \leq \frac{1+o(1)}{v^2k} \left( \sum_{i \in \cI} m_i u_i^2 \right)^2.
\]
Let us also state the absolute bound for this case.

\begin{Proposition}[Krein Bound, Positive-$O(\frac{k}{v})$-Flat $1$-Walk-Regular Approximately Strongly Regular Graphs]\label{prop:krein_bnd_for_asrg_1walk2}
    Consider a family of positive-$O(\frac{k}{v})$-flat 
    $1$-walk-regular approximately strongly regular graphs 
    with $\mu > \lambda$, $k=o(v)$, and $k = o((\mu-\lambda)^{\frac32})$.
    Then $\sigma = \Theta((\mu - \lambda)^{\frac54} \cdot  v^{-\frac12}k^{\frac14})$.
\end{Proposition}

\subsection{An Absolute Bound}

\begin{Proposition}[Absolute Bound, Approximately Strongly Regular Graphs]\label{prop:absolute_bnd_for_asrg}
    Consider a family of approximately strongly regular graphs such that 
    $\lambda > \mu$ and $\sqrt{v} \cdot k = o((\lambda-\mu)^2)$.
    Then $\sigma \geq (\tfrac13+o(1)) \frac{k}{v}$.
\end{Proposition}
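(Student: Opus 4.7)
The plan is to argue by contradiction in the style of Proposition~\ref{prop:krein_bnd_for_asrg}. Suppose $\sigma < (\tfrac{1}{3}-\delta)\tfrac{k}{v}$ for some fixed $\delta > 0$. Then Lemma~\ref{lem:evs_E} gives $|\nu_i| \leq v\sigma = O(k)$, and combined with $\sqrt{v}\,k = o((\lambda-\mu)^2)$ this forces $|\nu_i| = o((\lambda-\mu)^2)$. Hence every restricted eigenvalue falls in the regime of Lemma~\ref{lem:asrg_ev2}(iii)(iv): positive-form eigenvalues satisfy $u_i = (1+o(1))(\lambda-\mu)$ while negative-form eigenvalues satisfy $u_i = -(1+o(1))(k-\mu+\nu_i)/(\lambda-\mu)$.

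Next, I count the number $f_{\mathrm{pos}}$ of positive-form restricted eigenvalues via the trace identity $\sum_{i \neq 0} u_i = -k$. Substituting the second-order expansion of $\sqrt{(\lambda-\mu)^2 + 4(k-\mu+\nu_i)}$ (via Lemma~\ref{lem:approx}) into each $u_i$, using $\sum_{i \neq 0}\nu_i = \tr(E) = 0$ (note that $j$ is an eigenvector of $E$ with eigenvalue $0$, since $Ej = (A^2 - kI - \lambda A - \mu(J-I-A))j = 0$ by the identity $(v-k-1)\mu = k(k-\lambda-1)$), and controlling $|\sum_{\mathrm{pos}}\nu_i| \leq \sqrt{f_{\mathrm{pos}}}\cdot v\sigma$ by Cauchy--Schwarz and Lemma~\ref{lem:evs_E}, I expect to derive
\[
    f_{\mathrm{pos}} = (1+o(1))\,\frac{vk}{(\lambda-\mu)^2}.
\]

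Then I apply Proposition~\ref{prop:abs_reg} with $r = (1-o(1))(\lambda-\mu)$, $s = -\frac{k-\mu}{\lambda-\mu} + \frac{v\sigma}{\lambda-\mu}$, $\eps = \frac{2v\sigma}{\lambda-\mu}$, $f_1 = f_{\mathrm{pos}}$ and $f_2 = f_{\mathrm{pos}}+1$. The crude bound $|\nu_i| \leq v\sigma$ ensures every $u_i \geq s-\eps$ and that every negative-form eigenvalue lies in $[s-\eps, s]$, contributing at least $v-f_2$ such eigenvalues; and the positive-form eigenvalues give precisely $f_{\mathrm{pos}}$ restricted eigenvalues in $[r, k]$. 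The condition $s^2+s > \eps$ reduces, using $s \approx -(k-\mu)/(\lambda-\mu)$, to an inequality of the form $(k-\mu)(k-\lambda)/(\lambda-\mu)^2 \gtrsim v\sigma/(\lambda-\mu)$, which the assumed bound $\sigma < (\tfrac{1}{3}-\delta)k/v$ is designed to satisfy. The proposition then yields $v \leq f_2(f_2+1) - f_1 = (1+o(1))f_{\mathrm{pos}}^2 = (1+o(1))v^2k^2/(\lambda-\mu)^4$, which rearranges to $(\lambda-\mu)^2 \leq (1+o(1))k\sqrt{v}$, contradicting $\sqrt{v}\,k = o((\lambda-\mu)^2)$.

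The hardest part is producing the sharp asymptotic $f_{\mathrm{pos}} = (1+o(1))vk/(\lambda-\mu)^2$: the leading-order trace identity only pins down $f_{\mathrm{pos}}$ once one retains the second-order Taylor term from the square root and estimates $\sum_{\mathrm{pos}}\nu_i$ by Cauchy--Schwarz, so that the $\nu_i$-correction is negligible compared with the main term $vk/(\lambda-\mu)^2$. The other subtlety is calibrating the choice of $s$ and $\eps$ so that the hypothesis $s^2+s > \eps$ of Proposition~\ref{prop:abs_reg} is equivalent to the sharp bound $\sigma < (\tfrac13-o(1))k/v$ rather than a weaker condition.
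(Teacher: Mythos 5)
Your overall route coincides with the paper's: assume $\sigma \leq (D+o(1))\tfrac{k}{v}$ for a constant $D<\tfrac13$, use Lemma~\ref{lem:evs_E} to get $|\nu_i|\leq v\sigma = O(k) = o((\lambda-\mu)^2)$, localize positive-form eigenvalues near $\lambda-\mu$ and negative-form ones in a short interval around $-\tfrac{k-\mu}{\lambda-\mu}$ via Lemma~\ref{lem:asrg_ev2}(iii)--(iv), bound the number of restricted eigenvalues in $[r,k]$, and feed this into Proposition~\ref{prop:abs_reg} to contradict $\sqrt{v}\,k = o((\lambda-\mu)^2)$. One structural remark: the step you single out as hardest --- the sharp asymptotic $f_{\mathrm{pos}} = (1+o(1))\,vk/(\lambda-\mu)^2$ extracted from the first-moment trace identity with a second-order Taylor term and a Cauchy--Schwarz estimate on $\sum_{\mathrm{pos}}\nu_i$ --- is unnecessary. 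Proposition~\ref{prop:abs_reg} only needs an \emph{upper} bound $f_2$ on the number of restricted eigenvalues in $[r,k]$ (the lower bound $f_1$ is a minor-order term and can be dropped), and the one-line second-moment bound $\sum u_i^2 = \tr(A^2) = vk$ already gives $f_2 \leq (1+o(1))\,vk/(\lambda-\mu)^2 = o(\sqrt{v})$, which is exactly what the paper uses and which suffices for the contradiction $v \leq f_2(f_2+1) = o(v)$.

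The genuine gap is the calibration of the constant $\tfrac13$, which you flag as an unresolved subtlety and whose proposed resolution is wrong. You reduce the hypothesis of Proposition~\ref{prop:abs_reg} to an inequality comparing $s^2+s$, which is of order $\bigl(\tfrac{k-\mu}{\lambda-\mu}\bigr)^2$, with $\eps = \tfrac{2v\sigma}{\lambda-\mu}$, which is of order $\tfrac{k-\mu}{\lambda-\mu}$; whenever $\tfrac{k-\mu}{\lambda-\mu}\rightarrow\infty$ this holds for \emph{every} constant $D$, so this comparison cannot be the source of $\tfrac13$ --- your verification as written would ``prove'' $\sigma \geq (1+o(1))\tfrac{k}{v}$. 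The constant actually comes from the condition that the proof of Proposition~\ref{prop:abs_reg} really uses, namely $\eps^2 < s^2+s$ (the interlacing argument there bounds the non-top eigenvalues of $M\otimes M$ by $\eps^2$, not by $\eps$; the hypothesis should be read as $s^2+s>\eps^2$). With $s = -(1-D+o(1))\tfrac{k-\mu}{\lambda-\mu}$ and $\eps = (2D+o(1))\tfrac{k-\mu}{\lambda-\mu}$, both sides are now of order $\bigl(\tfrac{k-\mu}{\lambda-\mu}\bigr)^2$, and the condition becomes $4D^2 < (1-D)^2$, i.e.\ $D < \tfrac13$. Without this step your argument does not recover the stated constant, and the stronger conclusion it would yield is not justified by the actual content of Proposition~\ref{prop:abs_reg}.
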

\begin{proof}
    Our plan for applying Proposition \ref{prop:abs_reg} is as follows: 
    We can ignore $f_1$ as it is a minor order term.
    We suppose that $\sigma \leq (D + o(1)) \frac{k}{v}$
    for some constant $D<\frac13$.
    Put $r = (1+o(1)) (\lambda-\mu)$,
    $s = -(1-D+o(1)) \frac{k-\mu}{\lambda-\mu}$, and 
    $\eps = (2D+o(1)) \frac{k-\mu}{\lambda-\mu}$.
    As $D < \frac13$, we have that $s^2+s > \eps^2$.
    
    \medskip 
    
    By Lemma \ref{lem:evs_E}, any $\nu_i$ satisfies
    $|\nu_i| \leq v \sigma \leq (D+o(1)) k$.
    If an eigenvalue $u_i$ has positive form, then,
    by Lemma \ref{lem:asrg_ev2} (iii),
    \begin{align*}
        u_i = (1+o(1))(\lambda-\mu) = r.
    \end{align*}
    If an eigenvalue $u_i$ has negative form, then,
    by Lemma \ref{lem:asrg_ev2} (iv),
    \begin{align*}
        s-\eps = -(1+o(1)) \tfrac{k-\mu+Dk}{\lambda-\mu} 
        \leq u_i \leq -(1+o(1)) \tfrac{k-\mu-Dk}{\lambda-\mu} = s.
    \end{align*}
    
    To apply Proposition \ref{prop:abs_reg},
    it remains to determine $f_2$, that is we need to bound the number
    of restricted eigenvalues in $[r, k]$.
    We already saw that all such $u_i$ are of positive form.
    Using $\sum u_i^2 = \tr(A^2) = vk$, we see that there
    are at most $(1+o(1)) \frac{vk}{(\lambda-\mu)^2}$
    eigenvalues $u_i$ of positive form in this interval.
    Using $\sqrt{v} \cdot k = o((\lambda-\mu)^2)$, 
    we see $\frac{vk}{(\lambda-\mu)^2} = o(\sqrt{v})$.
\end{proof}

For instance, for $v = (1+o(1)) q^{11}$, $k = (1+o(1)) q^9$,
$\lambda = (1+o(1)) q^8$, and $\mu = (1+o(1)) q^7$,
Proposition \ref{prop:absolute_bnd_for_asrg} shows 
$\sigma \geq (\tfrac13 + o(1)) q^{-2}$.

\subsection{Cocliques}

\begin{Proposition}[Inertia Bound for ARSGs]\label{prop:inertia_asrg}
    Consider a family of approximately strongly regular graphs $\Gamma$
    such that $k = o(v)$, $k = o(|\lambda-\mu|^2)$.
    Then a coclique of $\Gamma$
    has at most size 
    \[ 
        (1+o(1)) \left( \frac{vk}{(\mu-\lambda)^2} + \frac{v^2\sigma^2}{k^2}\right).
    \]
\end{Proposition}
\begin{proof}
    Suppose without loss of generality that $\mu > \lambda$.
    We want to apply the inertia bound from \S\ref{sec:inertia}
    with the adjacency matrix,
    so we count negative eigenvalues.
    We need to count the number $g_1$ 
    of eigenvalues $u_i$ with $\nu_i = \Omega(k)$, 
    as these can be negative by Lemma \ref{lem:asrg_ev2} (i) and (vi),
    and we need to count the number $g_2$ of eigenvalues of 
    negative form with $\nu_i = O(k)$.
    
    For $g_1$, Lemma \ref{lem:evs_E} shows 
    $g_1 k^2 \leq v^2 \sigma^2$.
    For $g_2$, note that all eigenvalues
    considered have size $(1+o(1)) (\mu -\lambda)$,
    so $(1+o(1)) g_2 (\mu-\lambda)^2 \leq \sum u_i^2 = \tr(A^2) = vk$
    yields the claim.
\end{proof}

\section{Examples}

Clearly, strongly regular graphs provide plenty of examples
for approximately strongly regular graphs with $\sigma = 0$.
Let us present
examples with small, but nonzero $\sigma$.

\subsection{Very Small Examples}

We list, using the classification
of small regular graphs \cite{Meringer1999}, 
the number of connected graphs with smallest $\sigma$ for given 
$v$ and $k$. The last column contains a common 
name or a structure description of the 
automorphism group.

{\medskip 
% \footnotesize
\noindent
\begin{tabular}{ccccccl}
$v$ & $k$ & $\lambda$ & $\mu$ & $\sigma$ & nr & remarks \\
\hline
$8$  & $3$ & $0$ & $1.5$ & $0.5$ & $1$ & $D_8$ \\
$10$ & $3$ & $0$ & $1$ & $0$ & $1$ & Petersen graph, $NO^{-\perp}_{3,5}$ \\
$12$ & $3$ & $0$ & $0.75$ & $\sim0.43$ & $2$ & $D_8$, $D_9$ \\
$14$ & $3$ & $0$ & $0.8$ & $\sim0.49$ & $9$ & \\
$16$ & $3$ & $0.625$ & $0.34375$ & $\sim0.48$ & $2$ & $D_6$, $D_9$ \\
$18$ & $3$ & $0.\overline{6}$ & $0.3\overline{571428}$ & $\sim0.47$ & $2$ & $D_6$, $S_3^2 \rtimes C_2$ \\
$20$ & $3$ & $0.3$ & $0.31875$ & $\sim0.47$ & $5993$ & \\
$22$ & $3$ & $0.\overline{27}$ & $0.2\overline{87}$ & $\sim0.45$ & $86977$ & \\
$9$  & $4$ & $1$ & $2$ & $0$ & $1$ & ${\rm Paley}(9)$ \\
$10$ & $4$ & $0.75$ & $1.8$ & $\sim0.43$ & $1$ & $D_5$ \\
$11$ & $4$ & $1.\overline{09}$ & $1.\overline{27}$ & $\sim0.44$ & $1$ & $C_2^2 \times S_3$ \\
$12$ & $4$ & 1 & $1.\overline{142857}$ & $0.41$ & $1$ & $C_2 \times D_4$ \\
$13$ & $4$ & $0.\overline{692307}$ & $1.\overline{153846}$ & $\sim0.46$ & $1$ & $D_8$ \\
$14$ & $4$ & $0.32\overline{142857}$ & $1.\overline{190476}$ & $\sim0.47$ & $2$ & $\mathrm{id}$, $C_2^2$\\
$15$ & $4$ & $0.1$ & $1.16$ & $\sim0.37$ & $1$ & $D_6$ \\
$16$ & $4$ & $0$ & $1.\overline{09}$ & $0.36$ & $1$ & $C_2^4 \rtimes C_2$ \\
$12$ & $5$ & $0.7$ & $2.75$ & $\sim0.46$ & $1$ & $S_3^2$ \\
$14$ & $5$ & $1.0\overline{285714}$ & $1.\overline{857142}$ & $\sim0.45$ & $1$ & $C_2 \times D_4$ \\
$13$ & $6$ & $2$ & $3$ & $0$ & $1$ & ${\rm Paley}(13)$ \\
% $15$ & $6$ & $1$ & $3$ & $0$ & $1$ & $NO^-(4,3)$ \\
% $16$ & $6$ & $2$ & $2$ & $0$ & $2$ & $4^2$, Shrikhande graph
\end{tabular}\par}

\subsection{Some Examples from Literature}\label{sec:list_examples}

Various examples for small $\sigma$ occur in the literature.
Here we list some.

\begin{enumerate}[(i)]
 \item In \cite{RX2007} Radziszowski and Xiaodong
 describe an approximately strongly regular graph
 with parameters $(127, 42, 11, \mu; \sigma)$
 where $\mu \in [14, 16]$ and $\sigma \leq 1$.
 \item In \cite{BT1981} Bollob\'{a}s and Thomason construct an approximately
 strongly regular graph with parameters 
 $(2^r, 2^{r-1}-1, \lambda, \mu; 1)$ where
 $\lambda \in [2^{r-2}-2, 2^{r-2}-1]$
 and $\mu \in [2^{r-2}-1, 2^{r-2}]$.
 \item In \cite[Theorem 4]{SDPJ2016} Shi, Dong, Petersen, and Johansson
 show that certain graphs related to quantum networks
 are edge-regular approximately strongly regular graphs with 
 parameter $(v, k, n-2, \mu; \sigma)$ where $\mu \in [0,1]$
 and $\sigma \leq 1$.
 \item An edge-regular graph with parameters $(v, k, \lambda)$
 is  {\it quasi-strongly regular graph} with parameters 
 $(v, k, \lambda; \mu_1, \mu_2)$ if 
 $\mu_{ab} \in \{ \mu_1, \mu_2 \}$ for all nonadjacent 
 vertices $a,b$. For $\mu_1 < \mu_2$, 
 it is approximately strongly regular
 with parameters $(v, k, \lambda, \mu; \sqrt{\mu_2-\mu_1})$
 where $\mu \in [\mu_1, \mu_2]$, cf. \cite{Goldberg2006}.
%  F. Goldberg, On quasi-strongly regular graphs, Linear and Multilinear Al-
% gebra, Volume 54, Issue 6, (2006) 437-451.
 \item A $k$-regular graph of order $v$ is 
 a {\it Deza graph} with parameters $(v, k, \beta, \alpha)$,
 where $\alpha \leq \beta$,
 if $\lambda_{ab}, \mu_{ab} \in \{ \alpha, \beta \}$
 for all vertices $a,b$.
 It is approximately strongly regular with 
 parameters $(v, k, \lambda, \mu; \sqrt{\beta-\alpha})$
 where $\lambda, \mu \in [\alpha, \beta]$, cf. \cite{GS2021}.
 % Deza survey
 \item A random $k$-regular graph of order $v$, cf. \cite[\S2.4]{Bollobas2001},
 is an approximately strongly regular graph 
 with parameters (roughly) 
 $(v, k, \frac{k^2}{v}, \frac{k^2}{v}; \frac{k}{\sqrt{v}})$.
 \item Let $\Gamma$ be the intersection graph of $d$-spaces in $\Ff_q^n$,
 that is the graph with the $d$-spaces 
 of $\Ff_q^n$ as vertices,
 two adjacent if they meet nontrivially, 
 cf. \cite[\S1.2.4]{BvM}. 
 Let denote the number
 of $b$-spaces in $\Ff_q^a$ by $\gauss{a}{b}_q$. 
 It is easy to see that 
 for $n \gg 2d$ and $q \rightarrow \infty$,
 $\Gamma$ is approximately
 strongly regular with parameters 
 $(\gauss{n}{d}, \gauss{n}{d} - q^{d^2} \gauss{n-d}{d}, \lambda, \mu; 1)$,
 where $\lambda = \Theta( \gauss{n-1}{d-1} )$ and $\mu = \Theta(q^{2(d-1)} \gauss{n-2}{d-2})$.
 \item Consider a $2$-$(V, K, \Lambda)$ design $\mathcal{D}$, cf. \cite[\S6.2]{BvM}.  
 Let $\Gamma$
 be the graph with the blocks of $\mathcal{D}$ as vertices,
 two adjacent if they intersect.
 If $\Gamma$ is regular, then 
 $\Gamma$ is approximately regular with 
 parameters $(v, k, \lambda, \mu; \sigma)$
 where $v \sim \frac{V^2}{K^2} \Lambda$,
 $k \sim V\Lambda$, $\lambda \sim \frac{V\Lambda}{K}$,
 $\mu \sim K^2 \Lambda$, and $\sigma \rightarrow 0$
 if $K, \Lambda$ are constant and $V \rightarrow \infty$.
 Due to the seminal work by Keevash \cite{KeevashDesigns}, these exist if
 $V$ is sufficiently large and some divisibility 
 conditions are satisfied, but we do not know 
 if we can guarantee that each block is disjoint 
 to the same number of blocks, that is the 
 regularity of $\Gamma$.
 \item It is well-known that if one has equality 
 in the relative (or special) bound 
 for equiangular lines in $\Rf^d$,
 then one obtains a strongly regular graph
 from the Gram matrix of the set of 
 equiangular lines, cf. \cite[\S8.14]{BvM}.
 Similarly, if one is close, then 
 the graph from the Gram matrix
 has its spectrum concentrated at two values.
 This is used often, for instance 
 recently in \cite{GS2022,GSS2022}.
 Thus, if the graph is regular, then it 
 is approximately strongly regular with small $\sigma$.
 We did not estimate $\sigma$.
\end{enumerate}

% close to relative bound?
% close to two-graph?

\subsection{Orthogonality Graphs}\label{sec:orth_graphs}

Our application in \S\ref{sec:Ktfree}
is partially motivated by the construction described here.

For $q$ an odd prime power, let $V$ be the $n$-dimensional vector spaces over $\Ff_q$, 
the finite field with $q$ elements.
As $q$ is odd, $\Ff_q$ contains $\frac{q-1}{2}$ (nonzero) squares and
$\frac{q-1}{2}$ nonsquares.
Put $\gamma = 1$ if $q \equiv 1\pmod{4}$
and $\gamma = -1$ if $q \equiv 3\pmod{4}$.

A {\it quadratic form} over $\Ff_q$ is a map $Q: V \rightarrow \Ff_q$
such that $Q(\alpha v) = \alpha^2 Q(x)$ for all $\alpha \in \Ff_q$
and $x \in V$ and the function $B(x, y) \coloneq Q(x+y) - Q(x) - Q(y)$ is bilinear.
We can find an $(n \times n)$-matrix $M$ such that $Q(x) = x^T M x$.
We say that $Q$ is {\it nondegenerate} if $\det(Q) \coloneq \det(M) \neq 0$.
From now on we assume that $Q$ is nondegenerate.
For $x \in V$ nonzero, call $\<x\>$ a {\it point}.
Call a point $\<x\>$ {\it singular} when $Q(x) = 0$.
Cf. \S2 and \S3 in \cite{BvM} and \S11 in \cite{Taylor91}.

If $n=2m+1$ is odd, then there is only one choice for $Q$ up to isomorphism.
The set of nonsingular points $\< x \>$ splits into two parts of
sizes $\frac12 q^m (q^m+\eps)$ for $\eps \in \{ -1, 1\}$.
Here $\eps$ depends on $Q(x)$ being a (nonzero) square or a nonsquare.
Let $NO^{\eps\perp}_{n,q}$ denote the graph
with one of the parts as vertices,
two vertices $x,y$ adjacent when they are orthogonal, that is $B(x, y) = 0$.
We identify $\eps=1$ with $+$ and $\eps=-1$ with $-$,
so we write $NO^{+\perp}_{n,q}$ and $NO^{-\perp}_{n,q}$.
In \cite{BvM} these graphs are called $NO^{\eps\perp}_n(q)$
for $q \in \{ 3, 5 \}$.

The automorphism group of $NO^{\eps\perp}_{n,q}$ acts 
transitively on cliques of a given size
as the corresponding orthogonal group acts transitively
on tuples of pairwise orthogonal points of the same type.
In particular, $NO^{\eps\perp}_{n,q}$ is edge-regular with parameters
\begin{align*}
    & v = \tfrac12 q^m (q^m+\eps), && k = \tfrac12 q^{m-1} (q^m-\eps),
    && \lambda = \tfrac12 q^{m-1}(q^{m-1}+\gamma\eps).
\end{align*}
For $q=3,5$, the graph $NO^{\eps\perp}_{n,q}$ is 
strongly regular with $\mu = \frac12 q^{m-1} (q^{m-1}-\eps)$.
Standard counting for quadratic forms shows that
\begin{align*}
    \tfrac12 q^{m-1}(q^{m-1}-1) \leq \mu_{xy} \leq \tfrac12 q^{m-1}(q^{m-1}+1).
\end{align*}
Hence, for fixed $n$ and $q \rightarrow \infty$, the graph $NO^{\eps\perp}_{n,q}$ 
is approximately strongly regular with $\sigma \leq (1+o(1))q^{m-1}$.

\bigskip 

If $n=2m$ is even, then there are two choices for $Q$
up to isomorphism, depending on if $Q$ is of elliptic (put $\eps =-1$) 
or hyperbolic type (put $\eps = 1$). We can distinguish them 
by the number of singular points which is
$\frac{(q^{m-1}+\eps) (q^{m}-\eps)}{q-1}$.
We can also distinguish them by $\det(Q)$ being a (nonzero) square or a nonsquare.
The nonsingular points $\<x\>$ split into two orbits of equal size
$\frac12 q^{m-1}(q^m-\eps)$ each,
depending on $Q(x)$ being a square or a nonsquare.
Let $NO^{\eps\perp}_{n,q}$ denote the graph
with one of the parts as vertices,
two vertices adjacent when orthogonal, that is $B(x, y) = 0$.
In \cite{BvM} these graphs are called $NO^{\eps}_n(q)$ for $q=3$.

As for $n$ odd, the automorphism group of $NO^{\eps\perp}_{n,q}$ 
acts transitively on cliques.
In particular, it is edge-regular with parameters
\begin{align*}
    & v = \tfrac12 q^{m-1} (q^m - \eps), && k = \tfrac12 q^{m-1}(q^{m-1} - \gamma \eps), 
    && \lambda = \tfrac12 q^{m-2} (q^{m-1} + \gamma\eps).
\end{align*}
For $q=3$, the graph $NO^{\eps\perp}_{n,q}$ is strongly regular 
with $\mu = \frac12 q^{m-1}(q^{m-2}+ \eps)$.
Standard counting for quadratic forms shows that
\begin{align*}
    \tfrac12 q^{m-1}(q^{m-2}-1) \leq \mu_{xy} \leq \tfrac12 q^{m-1}(q^{m-2}+1).
\end{align*}
Hence, for fixed $n$ and $q \rightarrow \infty$, the graph $NO^{\eps\perp}_{n,q}$ 
is approximately strongly regular with $\sigma \leq (1+o(1))q^{m-1}$.

\bigskip 

There is the following tower of graphs 
(see \cite[p. 89]{BvM} for the case $q=3$):
Let $NO^{\eps\perp}_{n,q}(x)$ denote the induced subgraph 
on the neighborhood of $x$ in $NO^{\eps\perp}_{n,q}$.
We find that $NO^{\eps\perp}_{2m+1,q}(x)$ is 
isomorphic to $NO^{\eps\perp}_{2m,q}$,
and that $NO^{\eps\perp}_{2m,q}(x)$ is 
isomorphic to $NO^{\gamma\eps\perp}_{2m-1,q}$.
The graph $NO^{+\perp}_{2,q}$
is edgeless if $\gamma = -1$, otherwise
it is the union of $\frac{q-1}{4}$ pairwise disjoint edges.
The graph $NO^{-\perp}_{2,q}$ is edgeless if $\gamma = 1$,
otherwise it is the union of $\frac{q+1}{4}$ pairwise disjoint edges.
By induction, we find that the clique number of $NO^{\eps\perp}_{n,q}$
for $n=2m+1$ or $n=2m$ is $n-1$ if $\gamma \eps = (-1)^m$
and $n$ if $\gamma \eps = -(-1)^m$.

\section{Some Applications}\label{sec:app}

\subsection{Large Caps}\label{sec:caps}

Let $n \geq 2$ and let $q$ be a prime power.
Consider a set of points $\cC$ in $\PG(n, q)$, the $n$-dimensional
projective space over $\Ff_q$.
We use that the number of points in 
$\PG(n, q)$ is $\frac{q^{n+1}-1}{q-1}$.
If no three points in $\cC$ are collinear,
then $\cC$ is called a {\it cap}.

For the regime of $q=3$ and $n \rightarrow \infty$,
the cap set problem recently gained much prominence due to the 
breakthrough result by Ellenberg and Gijswijt, see \cite{EG2017}.
Here we consider the regimes where $n$ is fixed and $q \rightarrow \infty$
as well as where $q$ is fixed and $n \rightarrow \infty$.
Note that \cite{EG2017} considers caps in $\Ff_q^n$, not $\PG(n, q)$, 
but this only changes bounds by constant factor.
We always assume that $q \geq 3$
as $q=2$ is trivial.
As the calculations for $n$ fixed require slightly more care
than those for $q$ fixed (but are essentially identical),
we will only include those.
It is easy to see that a cap has size at most $(1+o(1)) q^{n-1}$
for $n$ fixed and $q \rightarrow \infty$.
The largest known constructions for caps have
size $\Theta(q^{\lfloor \frac23 n \rfloor})$. 
This is tight for $n = 2,3$.
See \cite{Edel2004,EB1999} for constructions 
of caps for large $n$ or large $q$.

\medskip 

It is well-known that caps define graphs in various ways.
For a cap $\cC$ of $\PG(n, q)$, define an {\it associated graph} 
$\Gamma$ as follows: 
Consider $\Ff_q^{n+1}$ with $\PG(n, q)$ as hyperplane at infinity.
Take the vectors of $\Ff_q^{n+1}$ as vertices,
two distinct $a,b \in \Ff_q^{n+1}$ adjacent
if $\< a, b \>$ meets $\PG(n, q)$ in a point of $\cC$.
Put $t = |\cC|$.
It is well-known and easy to verify that
this defines an edge-regular graph with
$(v, k, \lambda) = (q^{n+1}, t(q-1), q-2)$.
An {\it exterior point} of $\cC$ is a point of $\PG(n, q)$
not in $\cC$ and a {\it secant} of $\cC$ is a line of $\PG(n, q)$
which meets $\cC$ in precisely two points.

If each exterior point lies on precisely the same number $h$
of secants, then we obtain a strongly regular graph
with $\mu = \frac{t(t-1)(q-1)^2}{q^{n+1}-t(q-1)+1}$.
See \S8.7.1(vi) in \cite{BvM}.
We can say the following about this case.

\begin{Lemma}\label{lem:srg_cap}
    Let $\cC$ be a cap of size $t$ in $\PG(n, q)$ such that each exterior point lies on
    a constant number of secants. Then
    \begin{align*}
        &t \leq (1+o(1)) q^{\frac34 n - \frac14} && (\text{as } q \rightarrow \infty),\\
        &t = O(q^{\frac34 n}) && (\text{as } n \rightarrow \infty).
    \end{align*}
\end{Lemma}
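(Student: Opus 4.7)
The plan is to apply the Krein bound (Theorem~\ref{thm:krein_srg}) to the strongly regular graph $\Gamma$ associated with $\cC$, whose parameters are
\[
v = q^{n+1}, \quad k = t(q-1), \quad \lambda = q-2, \quad \mu = \frac{t(t-1)(q-1)^2}{q^{n+1}-1}.
\]
I would first identify the restricted eigenvalues $r \geq 0 > s$ from the standard SRG identities $r+s = \lambda - \mu$ and $rs = \mu - k$. One may assume $t$ exceeds $q^{n/2}$, since otherwise the claim is automatic for $n \geq 2$; under that assumption $\mu$ dominates $\lambda$, so $s = -(1+o(1))\mu$ and $r$ is of strictly smaller order.

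Next I substitute this into the Krein inequality
\[
1 + \frac{s^3}{k^2} - \frac{(s+1)^3}{(v-k-1)^2} \geq 0.
\]
Since $k = o(v)$ in the relevant range of $t$, the second fraction is dominated by $s^3/k^2$ and the inequality collapses to $\mu^3 \leq (1+o(1)) k^2$. Substituting the explicit formulas for $\mu$ and $k$, this becomes
\[
t(t-1)^3 (q-1)^4 \leq (1+o(1)) (q^{n+1}-1)^3,
\]
which rearranges to $t \leq (1+o(1)) q^{(3n-1)/4}$ as $q \rightarrow \infty$ with $n$ fixed, and to $t = O(q^{3n/4})$ as $n \rightarrow \infty$ with $q$ fixed; in the latter regime the extra factor $q^{3/4}/(q-1)$ is a bounded constant absorbed into the big-$O$.

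The main point to be careful about is that the $o(1)$ corrections in the Krein bound hold uniformly in both asymptotic regimes: in the first, $k/v = \Theta(t/q^n) = o(1)$ at the extremal size, and in the second, $k/v \leq q^{-n/4+O(1)}$, so in both cases the subtracted Krein term is genuinely negligible. Apart from this bookkeeping the argument is mechanical, and I do not foresee a conceptual obstacle once $s = -(1+o(1))\mu$ has been established.
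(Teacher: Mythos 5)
Your argument is the same as the paper's: read off $s=-(1+o(1))\mu$ for the associated strongly regular graph and feed it into the Krein inequality $1+s^3/k^2-(s+1)^3/(v-k-1)^2\ge 0$, which reduces to $\mu^3\le(1+o(1))k^2$ and hence to $t^4\le(1+o(1))q^{3n-1}$; the computations check out in both regimes (the paper only writes out the $q\to\infty$ case). One small slip: $\mu\gg\lambda$ alone does not give $s=-(1+o(1))\mu$ — you also need $k-\mu=o((\mu-\lambda)^2)$, i.e.\ $tq=o(t^4q^{2-2n})$, which forces $t=\omega(q^{(2n-1)/3})$ rather than just $t>q^{n/2}$ (for $t$ between these thresholds one has $s\approx-\sqrt{k}$, not $-\mu$). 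Since $(2n-1)/3<(3n-1)/4$, you may simply raise your without-loss-of-generality threshold to $q^{(2n-1)/3+\eps}$ and the rest of the proof goes through unchanged.
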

\begin{proof}
    We only prove the first part.
    Let us calculate the negative eigenvalue $s < 0$ of 
    the associated graph $\Gamma$.
    We find $s = -(1+o(1)) \mu$.
    One of the Krein conditions, Theorem \ref{thm:krein_srg}, 
    requires
    \begin{align*}
        0 &\leq 1 + \frac{s^3}{k^2} - \frac{(s+1)^3}{(v-k-1)^2}\\
        &= 1 - (1+o(1)) \frac{(t^2 q^{-n+1})^3}{(tq)^2}
        = 1 - (1+o(1)) \frac{t^4}{q^{3n-1}}.    
    \end{align*}
    Hence, $t \leq (1+o(1)) q^{\frac34 n - \frac14}$.
\end{proof}

{\medskip \footnotesize 
We can obtain the same bound using the inertia bound 
(estimates for $q \rightarrow \infty$ only):
We find that the multiplicity $g$ of $s$ is 
$(1+o(1)) vk/s^2 = (1+o(1)) q^{3n}/t^3$.
As $\cC$ has a clique of size $t$,
we find $t^4 \leq (1+o(1)) q^{3n}$.
Lastly, the absolute bound shows $t \leq (1+o(1)) q^{\frac56 n + \frac16}$.
\par} 

\medskip 

How much can we weaken the condition on the exterior points
and secants? From now on let $h$ be the expected
number of secants through an exterior point $p$ of $\PG(n, q)$ and let $h_p$
the actual number of secants through $p$.

\begin{Lemma}\label{lem:var_h_p}
  Let $\cC$ be a cap of size $t$ in $\PG(n, q)$ with 
  an associated approximately strongly regular graph $\Gamma$ 
  with parameters $(v, k, \lambda, \mu; \sigma)$.
  Then
  \begin{align*}
   & \Var(h_p) = (\tfrac14+o(1)) \Var(\mu_{ab}) && (\text{as } q \rightarrow \infty),\\
   & \Var(h_p) = \Theta(\Var(\mu_{ab})) && (\text{as } n \rightarrow \infty).
  \end{align*}
\end{Lemma}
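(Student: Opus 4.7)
The plan is to establish the exact identity $\mu_{ab} = 2 h_{[a-b]}$ for every nonadjacent pair $(a,b)$ and then convert it into a variance identity by a uniform-fiber argument. Both halves of the lemma will drop out of the resulting equality $\Var(\mu_{ab}) = 4 \Var(h_p)$.

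First, I would fix a nonadjacent pair $(a,b)$ and set $d = a - b$; by hypothesis $d \neq 0$ and the point $p = [d]$ is exterior to $\cC$. The common neighbors of $a,b$ correspond bijectively via $c \mapsto c - a$ to vectors $u \in \Ff_q^{n+1}$ with $[u] \in \cC$ and $[u+d] \in \cC$. Since the three vectors $u$, $u+d$, $d$ are linearly dependent, the projective points $[u]$, $[u+d]$, and $p$ are collinear whenever $[u] \neq [u+d]$; and as $p \notin \cC$, the two points $[u]$ and $[u+d]$ must be distinct, so they span a secant of $\cC$ through $p$. Conversely, for each secant $\ell$ through $p$ with cap points $P_1, P_2$, I would pick representatives $x_1, x_2$ and write $d = \gamma x_1 + \delta x_2$ in the $2$-space they span; since $p \neq P_1, P_2$, both $\gamma$ and $\delta$ are nonzero, so each of the two orderings $([u], [u+d]) \in \{(P_1,P_2), (P_2,P_1)\}$ yields exactly one vector $u$. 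This gives $\mu_{ab} = 2 h_p$ on the nose.

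To pass to variances, I would observe that the map $(a,b) \mapsto [a-b]$ from ordered nonadjacent pairs to exterior points is uniformly fibered: every exterior point has exactly $(q-1) q^{n+1}$ preimages, since $d$ ranges over the $q-1$ nonzero scalar multiples of a fixed representative and $a$ is free. Uniform averaging over nonadjacent pairs therefore matches uniform averaging over exterior points, and the identity $\mu_{ab} = 2 h_p$ propagates to $\mu = 2h$ and $\Var(\mu_{ab}) = 4 \Var(h_p)$ exactly. Since this identity is sharp, both parts of the lemma follow with room to spare; the $(\tfrac14 + o(1))$ and $\Theta$ formulations are weaker than the truth. The only subtle point is to check that $\mu_{ab}$ genuinely depends only on $[a-b]$ rather than on $d = a-b$ itself, which follows from a scaling-invariance observation: replacing $d$ by $\lambda d$ and $u$ by $\lambda u$ is a bijection on the solution set.
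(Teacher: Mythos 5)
Your proof is correct and takes essentially the same route as the paper's: the key identity $\mu_{ab}=2h_{[a-b]}$ (which the paper asserts without justification and you verify carefully via the secant--vector correspondence), combined with the fact that the fibers of $(a,b)\mapsto[a-b]$ over exterior points all have the same size. Your version is in fact sharper, giving the exact equality $\Var(\mu_{ab})=4\Var(h_p)$ and hence both asymptotic regimes at once, whereas the paper only carries out the $q\to\infty$ computation with asymptotic counts.
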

\begin{proof}
  We only show the assertion for $q \rightarrow \infty$.
  Let $M$ be as in the introduction, that is all pairs of nonadjacent vertices, 
  and let $\cD$
  denote the set of exterior points of $\cC$.
  Note that $|M| = q^{n+1} (q^{n+1} - t(q-1))$ and 
  that $|\cC| \leq (1+o(1)) q^{n-1}$ implies that $|\cD| = (1+o(1)) q^n$.
  If for two distinct nonadjacent
  vertices $a,b$ the line $\< a, b\>$ meets $\cD$ in $p$,
  then $2h_p$ is the number of common neighbors of $a$ and $b$.
  Hence,
  \begin{align*}
    \Var(\mu_{ab}) &= \frac{1}{M} \sum_{a \not\sim b} (\mu_{ab} - \mu)^2
    = \frac{1}{M} \sum_{p \in \cD} 
      \sum_{\substack{a \not\sim b,\\ \<a,b\> \cap \PG(n, q) = p}} (\mu_{ab} - \mu)^2\\
    &= \frac{1}{M} \sum_{p \in \cD} 4 \cdot q^{n+1} (q-2) \cdot (h_p - h)^2
    = (4+o(1)) q^{-n} \sum_{p \in \cD} (h_p - h)^2\\
    &= (4+o(1)) \frac{1}{|\cD|} \sum_{p \in \cD} (h_p - h)^2 
    = (4+o(1)) \Var(h_p). \qedhere
  \end{align*}
\end{proof}

\medskip 

\begin{Proposition}\label{prop:cap_bound_delta}
    For $n \geq 4$, 
    let $\cC$ be a cap of size $t \geq (2+o(1)) q^{\frac34 n}$ in $\PG(n, q)$ 
    and let $\cD$ denote its exterior points.
    \begin{enumerate}[(i)]
     \item If $\Var(h_{p}) \leq (\frac14+o(1)) \sigma^2$, then 
            $\sigma \geq (1+o(1)) t^{\frac32} q^{-n}$ (as $q \rightarrow \infty$).
     \item If $\Var(h_{p}) = \Theta(\sigma^2)$, then 
            $\sigma = \Omega(t^{\frac32} q^{-n})$ (as $n \rightarrow \infty$).
    \end{enumerate}
    If the associated graph $\Gamma$ is also postive-$1$-flat
    $1$-walk-regular, then we have the following.
    \begin{enumerate}[(i)]
     \item[(iii)] If $\Var(h_{p}) \leq (\frac14+o(1)) \sigma^2$, then 
            $\sigma \geq (1+o(1)) t^{3} q^{-2n+1}$ (as $q \rightarrow \infty$).
     \item[(iv)] If $\Var(h_{p}) = \Theta(\sigma^2)$, then 
            $\sigma = \Omega(t^{3} q^{-2n})$ (as $n \rightarrow \infty$).
    \end{enumerate}
\end{Proposition}
\begin{proof}
   Recall that from 
   $t = \Omega(q^{\frac34 n})$ and $n \geq 4$,
   we obtain $k = (q-1)t = o(\mu^{\frac32})$ and $\lambda = q-2 = o(\mu)$
   (as $\mu = (1+o(1)) t^2 q^{1-n}$).
   
   \smallskip 
   
   For the first part we apply the inertia bound
   from Proposition \ref{prop:inertia_asrg}.
    We already saw in the discussion on strongly regular graphs
    that we require $t \geq (1+o(1)) q^{\frac34 n}$
    for the first summand.
    What remains of Proposition \ref{prop:inertia_asrg} is
\[
    t \leq (1+o(1)) \frac{v^2\sigma^2}{k^2} = (1+o(1)) \frac{q^{2n+2} \sigma^2}{q^2t^2}.
\]
Rearranging for $\sigma$ yields $\sigma \geq (1+o(1)) t^{\frac32} q^{-n}$.
   
   \smallskip 
   For (iii) and (iv), use Proposition \ref{prop:krein_bnd_for_asrg_1walk}.
   We find 
   \[
        \sigma \geq (1+o(1)) (\mu - \lambda)^{\frac54} \cdot v^{-\frac34} k^{\frac12} = t^3 q^{-2n+1}.\qedhere 
   \]
\end{proof}

One can also use Proposition \ref{prop:krein_bnd_for_asrg}, 
but the resulting bounds on $\sigma$
are slightly worse than what is stated 
in Proposition \ref{prop:cap_bound_delta}.

\medskip 

Proposition \ref{prop:cap_bound_delta} 
implies the following.

\begin{Corollary}\label{cor:caps_bnds}
  For $n \geq 4$, let $\cC$ be a cap of size $t$ in $\PG(n, q)$.
  If $\Var(h_p) = o( q^{\frac14 n})$,
  then $t = O(q^{\frac34 n})$.
  If the associated graph is positive-$1$-flat $1$-walk-regular, then already
        $\Var(h_p) = o( q^{\frac12 n})$ implies
   $t = O(q^{\frac34 n})$.
   
   \smallskip 
   
  The above holds for $q \rightarrow \infty$ as well as $n \rightarrow \infty$.
\end{Corollary}

{\medskip \footnotesize 
For $t = \Theta(q^{n-1})$ (as $n \rightarrow \infty$),
we find $\sigma = \Omega(q^{n-2})$.
Using Proposition \ref{prop:krein_bnd_for_asrg_1walk2} instead 
of Proposition \ref{prop:krein_bnd_for_asrg_1walk} only yields 
a marginal improvement. For instance, for 
and $t = \Theta(q^{n-1})$ (as $n \rightarrow \infty$), 
we find $\sigma = \Omega(q^{n-2 + 0.25})$.

\smallskip 

If we assume that the setwise stabilizer of $\cC$ (a subgroup of $P\Gamma{}L(n, q)$)
acts transitively on $\cC$, then $\Gamma$ is $1$-walk-regular.
Maybe it is feasible to use Corollary \ref{cor:caps_bnds}
to show a bound of the form $O(q^{C n})$ for some $C < 1$
under some symmetry conditions.

\smallskip 

For $q=3$, Edel constructed caps of size $\Omega(2.21^n)$ \cite{Edel2004} 
and there is an upper bound of $o(2.76^n)$ by Ellenberg and Gijswijt \cite{EG2017}.
For the special cases of Corollary \ref{cor:caps_bnds}, we find an
upper bound of $o(2.28^n)$; 
for Lemma \ref{lem:srg_cap}, we also find $o(2.28^n)$.
In general it is known that 
$t \leq (1 - O(q^{-\frac12})) q^{n-1}$ (as $q \rightarrow \infty$),
cf. Table 4.4(ii) in \cite{HS2001}.\par} 

\subsubsection*{Explicit Bounds}

We can also find explicit bounds.
In the following we will demonstrate
this with some crude estimates.

\smallskip 

Suppose that we are looking for a cap of size $t$. 
We have a coclique of size $t$ in the 
associated graph, so 
we need at least $t$ nonpositive
eigenvalues. 
Let $f$ be the 
number of positive eigenvalues, 
and let $g=g_1+g_2$ be the 
number of negative eigenvalues,
$g_1$ of negative form (so they are at most $-\frac12 (\lambda-\mu)$), 
$g_2$ of positive form.
Then we require (using standard trace arguments as before)
\begin{align*}
    &g_1 + g_2 \geq t, && k^2+g_1 \cdot \tfrac1{2^2} (\lambda-\mu)^2 \leq vk, 
        && g_2(k-\mu)^2 \leq v(v-k-1)\sigma^2.
\end{align*}
For instance,
by \cite{HS2001}, a cap in $\PG(10, 3)$ has 
size at most $10937$. 
The largest known cap in $\PG(10, 3)$ 
has size 2744 \cite{EB2001}.
Put $t = 10937$. 
We find $(v,k,\lambda,\mu) = (3^{11}, 2 \cdot 10937, 1, \frac{29901758}{19409})$.
Then $g_1 \leq 5731$, so $g_2 \geq 10937-5731 = 5206$.
Hence, $5206 \cdot (k-\mu)^2 \leq v(v-k-1) \sigma^2$.
We obtain that $\sigma \geq 8.84$.

\subsection{Optimally Pseudorandom Clique-free Graphs}\label{sec:Ktfree}

A $k$-regular graph $\Gamma$ of order $v$
is called {\it optimally pseudorandom}
if the second largest eigenvalue in absolute value
of its adjacency matrix is in $O(\sqrt{k})$, cf. \cite{KS2006}.

\begin{Proposition}[Alon and Krivelevich, \cite{AK1997}]\label{prop:AK}
    Let $\Gamma$ be a $K_m$-free $k$-regular graph of order $v$
    with smallest eigenvalue $s$ such that $-s = O(\sqrt{k})$.
    Then 
    \[
      k = O(v^{1-\frac{1}{2m-3}}).
    \]
\end{Proposition}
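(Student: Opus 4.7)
The plan is to combine the Hoffman ratio bound on the independence number with an averaging argument over $(m-2)$-cliques. The hypothesis $-s = O(\sqrt{k})$ together with the Hoffman bound yields $\alpha(\Gamma) \leq \tfrac{-sv}{k-s} = O(v/\sqrt{k})$; since the conclusion is immediate when $k$ is very small, I may assume throughout that $k = \Omega(v^{1-1/(2m-3)})$, which in particular forces $k \gg \sqrt{v}$, so the independence bound is meaningful and small compared to $k$.

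The combinatorial input is the observation that for any $(m-2)$-clique $K$ in a $K_m$-free graph, the common neighborhood $N(K) = \bigcap_{u\in K}N(u)$ is an independent set: two adjacent vertices in $N(K)$ together with $K$ would form a $K_m$. Hence $|N(K)| = O(v/\sqrt{k})$ for every $(m-2)$-clique $K$.

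Next I would average. Writing $C_t$ for the number of $t$-cliques of $\Gamma$, the identity $\sum_{K}|N(K)|=(m-1)C_{m-1}$ (where $K$ runs over $(m-2)$-cliques) says that the mean of $|N(K)|$ is $(m-1)C_{m-1}/C_{m-2}$. Once I establish the pseudorandom lower bound $C_{m-1}/C_{m-2}=\Omega(k^{m-2}/v^{m-3})$, some $(m-2)$-clique $K$ realises $|N(K)|=\Omega(k^{m-2}/v^{m-3})$, and comparing with the Hoffman upper bound forces $k^{m-2}/v^{m-3}=O(v/\sqrt{k})$, i.e.\ $k^{m-3/2}=O(v^{m-2})$, which rearranges to $k=O(v^{1-1/(2m-3)})$.

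The main obstacle is establishing the lower bound $C_{m-1}/C_{m-2}=\Omega(k^{m-2}/v^{m-3})$, expressing that $\Gamma$ contains at least the pseudorandom number of $(m-1)$-cliques. The natural strategy is to build an $(m-2)$-clique greedily one vertex at a time: starting from $V(\Gamma)$, pick a vertex, pass to its neighborhood, and iterate, using at each step that the induced subgraph on any sufficiently large subset $S$ has average degree close to $k|S|/v$ (an expander-mixing-style estimate). Typically this uses a two-sided spectral bound; here, since only the smallest eigenvalue is controlled, the fix is to apply the Hoffman bound recursively inside the $K_{m-i}$-free induced subgraph on the common neighborhood of each partial clique, using eigenvalue interlacing to relate its smallest eigenvalue to $s$. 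Making this recursive estimate precise is the technical heart of the proof.
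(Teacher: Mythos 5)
The paper does not prove this proposition --- it is quoted from Alon and Krivelevich \cite{AK1997} --- so there is no internal proof to compare against. Your skeleton is the standard argument for this bound and the bookkeeping at the ends is right: the Hoffman bound gives $\alpha(\Gamma) = O(v/\sqrt{k})$, the common neighborhood of an $(m-2)$-clique is independent, and $k^{m-2}/v^{m-3} = O(v/\sqrt{k})$ does rearrange to $k = O(v^{1-\frac{1}{2m-3}})$.

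The gap is exactly where you place it, and your proposed repair of that step does not work as described. You worry that the greedy construction of an $(m-2)$-clique with large common neighborhood needs two-sided spectral control, and you propose to substitute a recursive Hoffman bound plus interlacing. But the Hoffman bound only bounds independence numbers from above; it cannot supply the \emph{lower} bound on the average degree inside a common neighborhood that the greedy step requires, and interlacing on induced subgraphs would only give you one-sided information in the wrong direction anyway. The correct tool is the one-sided expander-mixing inequality $e(Y) \geq \tfrac12 y\bigl(\tfrac{y(k-s)}{v} + s\bigr)$, which depends only on the smallest eigenvalue $s$ of the \emph{ambient} graph $\Gamma$ --- this is precisely the left inequality of Lemma \ref{lem:eml} in this paper, used in exactly this way in the proof of Lemma \ref{lem:clique-free_paras}(i). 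With it, each greedy step loses only an additive $|s| = O(\sqrt{k})$, so after $m-2$ steps one obtains a clique whose common neighborhood has size at least $k^{m-2}/v^{m-3} - (m-3)|s|$, and the error term is negligible under your standing assumption $k = \Omega(v^{1-\frac{1}{2m-3}})$. A second, more minor issue: routing the argument through the global ratio $C_{m-1}/C_{m-2}$ is an unnecessary detour (and a genuinely harder statement to prove than what you need); the greedy construction directly exhibits a single $(m-2)$-clique with large common neighborhood, which is all the comparison with $\alpha(\Gamma)$ requires.
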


This bound is tight for $m=3$ due to a construction by Alon \cite{Alon1994}.
Alon and Krivelevich gave an example with $k = \Theta(v^{1-\ssfrac{1}{m-2}})$ \cite{AK1997}.
The author noticed that there is 
a well-known construction with $k = \Theta(v^{1-\ssfrac{1}{m-1}})$ \cite{BIP2020}.
These are the graphs $NO^{\eps\perp}_{m,q}$ from \S\ref{sec:orth_graphs}
with clique number $m-1$.

\medskip 

The Ramsey number $R(m, n)$ is the largest number such that
there exists a graph on $R(m,n)-1$ vertices without a clique
of size $m$ or a coclique of size $n$.
Ajtai, Koml\'{o}s, and Szemer\'{e}di \cite{AKS1980}
and Bohman and Keevash \cite{BK2010} proved
\begin{align*}
    &\Omega\left( \tfrac{n^{\frac{m+1}{2}}}{(\log n)^{\ssfrac{m+1}{2} - \ssfrac{1}{m-2}}} \right) 
    = R(m,n) = O\left( \tfrac{n^{m-1}}{(\log n)^{m-2}} \right) 
    &&\text{(as $n \rightarrow \infty$)}.
\end{align*}
Recently, Mubayi and Verstra\"{e}te showed 
in \cite{MV2019} that if the upper bound
in Proposition \ref{prop:AK} is tight for some $m$, 
then $R(m,n) = \Omega(\frac{n^{m-1}}{(\log n)^{2m-4}})$,
nearly matching the upper bound. 
Their result also implies that if 
one finds a construction with 
$k = \Omega(v^{1-\ssfrac{1}{m+\eps}})$ for some $\eps>0$, 
then 
\begin{align*} 
&R(m,n) = \Omega\left(\tfrac{n^{\frac{m+\eps+1}{2}}}{(\log n)^{m+\eps+1}}\right) &&\text{(as $n \rightarrow \infty$)},
\end{align*}
which would improve the lower bound on $R(n, m)$.
Our technique here cannot show anything better
than $k = O(v^{1 - \ssfrac{1}{m+1}})$.

\medskip 

For the remainder of the section
consider an optimally pseudorandom 
$K_m$-free $k$-regular graph $\Gamma$ of order $v$,
smallest eigenvalue $s$, and second largest eigenvalue $r$, where $m \geq 3$.
Let $Y$ be a clique of size $i$ of $\Gamma$.
Let $\Gamma(Y)$ be the induced subgraph on the common neighborhood 
of $Y$. Let us define the following properties 
for $\Gamma_i \coloneq \Gamma(Y)$.
\begin{enumerate}
 \item[(P1)] The graph $\Gamma_i$ has $v_i$ vertices and 
            $\frac12 v_i k_i$ edges, where
        \begin{align*}
            &k_i \geq (1+o(1)) k \left( \tfrac{k}{v} \right)^i,  &&
            (1+o(1)) \tfrac{k}{v} \leq \tfrac{k_i}{v_i} = o(1).
        \end{align*}
  \item[(P2)] The graph $\Gamma_i$ is approximately strongly 
        regular with parameters $(v_i, k_i, \lambda_i, \allowbreak \mu_i; \sigma_i)$ and 
        its smallest eigenvalue $s_i$ satisfies
        \[
            -s_i = O(\mu_i-\lambda_i).
        \]
\end{enumerate}

Clearly, the graphs $NO^{\eps\perp}_{m,q}$ with clique number $m-1$
satisfy (P1). Furthermore, as mentioned in \S\ref{sec:orth_graphs},
the automorphism group of $NO^{\eps\perp}_{m,q}$ acts transitively
on cliques of a given size.
Hence, $\Gamma_i$ is regular.
The graphs $NO^{\eps\perp}_{m,q}$ with clique number $m-1$ often have property (P2).
We will see that property (P2) follows from (P1) for $i=m-3$
when $\Gamma_{m-3}$ is regular as $\Gamma_{m-3}$
is triangle-free, so $\lambda_{m-3} = 0$. More generally,
some $\Gamma_{i}$ must have property (P2) as $\lambda_i \leq (D+o(1)) \ssfrac{k_i^2}{v_i}$
has to occur for some $D<1$ for some $i$.

\medskip 

Let us state the expander-mixing lemma for the special case of only one set,
see the proof of Proposition 1.1.6 in \cite{BvM}.

\begin{Lemma}[Expander-Mixing Lemma, Variant]\label{lem:eml}
    Let $Y$ be a set of vertices of size $y$ of 
    a $k$-regular graph $\Gamma$ of order $v$
    with second largest eigenvalue $r$ and smallest eigenvalue $s$. 
    Then the number $e$ of edges in the induced subgraph on $Y$ satisfies
    \[
        \tfrac12 y \left( \tfrac{y(k-s)}{v} + s\right) \leq e \leq \tfrac12 y \left( \tfrac{y(k-r)}{v} + r\right).
    \]
\end{Lemma}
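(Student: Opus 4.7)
The plan is to compute $e$ via the characteristic vector of $Y$ and then control its Rayleigh quotient on the space orthogonal to the Perron eigenvector using $r$ and $s$.

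Concretely, let $\chi$ denote the characteristic vector of $Y$. Since each edge inside $Y$ contributes $2$ to $\chi^T A \chi$, we have $e = \tfrac12 \chi^T A \chi$. I would then decompose $\chi = \alpha j + \chi'$, where $\alpha = y/v$ is chosen so that $\chi'$ is orthogonal to the all-ones vector $j$. A direct computation gives $\|\chi'\|^2 = y - y^2/v = y(v-y)/v$.

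Next I would expand
\[
    \chi^T A \chi = \alpha^2 \, j^T A j + 2\alpha \, j^T A \chi' + \chi'^T A \chi'.
\]
Regularity of $\Gamma$ yields $A j = k j$, so $j^T A j = kv$ and $j^T A \chi' = k\, j^T \chi' = 0$. Hence
\[
    \chi^T A \chi = \frac{y^2 k}{v} + \chi'^T A \chi'.
\]
Because $\chi'$ lies in the orthogonal complement of $j$, its spectral expansion uses only restricted eigenvalues of $A$, which lie in $[s, r]$. The Rayleigh-quotient bounds then give
\[
    s \, \|\chi'\|^2 \;\leq\; \chi'^T A \chi' \;\leq\; r \, \|\chi'\|^2.
\]

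Substituting $\|\chi'\|^2 = y(v-y)/v$ and simplifying yields
\[
    \tfrac12 y \left( \tfrac{y(k-s)}{v} + s \right) \;\leq\; e \;\leq\; \tfrac12 y \left( \tfrac{y(k-r)}{v} + r \right),
\]
which is the claimed inequality. There is no real obstacle here: the proof is the standard expander-mixing argument specialised to a single set, and the only thing to be careful about is the algebraic rearrangement $y^2 k/v + r\,y(v-y)/v = y\bigl(y(k-r)/v + r\bigr)$ (and analogously with $s$).
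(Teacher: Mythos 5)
Your proof is correct and is exactly the standard argument that the paper defers to (it cites the proof of Proposition 1.1.6 in Brouwer--Van Maldeghem rather than proving the lemma itself): decompose the characteristic vector of $Y$ as $\tfrac{y}{v}j + \chi'$ with $\chi' \perp j$ and bound the Rayleigh quotient of $\chi'$ by the extreme restricted eigenvalues. All the computations ($\|\chi'\|^2 = y(v-y)/v$, the cross term vanishing by regularity, and the final rearrangement) check out.
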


\begin{Lemma}\label{lem:clique-free_paras}
  Let $0 \leq i \leq m-3$.
  \begin{enumerate}[(i)]
  \item If $k = \omega(v^{1-\frac{1}{2i{+}1}})$, then
        there exists a $\Gamma_i$ with property (P1).
   \item If $\Gamma_i$ is regular and has property (P1), then
  \[
     -s_i = \Omega\left(k_i \left( \tfrac{k_i}{v_i} \right)^{m-i-2}\right).
  \]
  \end{enumerate}
\end{Lemma}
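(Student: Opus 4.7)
For part~(i), the plan is induction on $i$, with base case $\Gamma_0=\Gamma$ and the empty clique. For the inductive step, given an $i$-clique $Y$ with $\Gamma_i=\Gamma(Y)$ satisfying (P1), I would apply Lemma~\ref{lem:eml} to $\Gamma$ on the set $V(\Gamma_i)$: the induced average degree is at least $v_i(k-s)/v+s$, so some $a\in V(\Gamma_i)$ has $|N_\Gamma(a)\cap V(\Gamma_i)|\geq v_i(k-s)/v+s$. Since $-s=O(\sqrt{k})$ by optimal pseudorandomness, the hypothesis $k=\omega(v^{1-\frac{1}{2i+1}})$ ensures that the main term $v_ik/v$ dominates $|s|$ at every step, yielding $v_{i+1}\geq(1-o(1))v_ik/v$. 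Setting $\Gamma_{i+1}=\Gamma(Y\cup\{a\})$ and applying Lemma~\ref{lem:eml} a second time inside $V(\Gamma_{i+1})$ gives $k_{i+1}\geq(1-o(1))v_{i+1}k/v$, which establishes both inequalities of (P1) at stage $i+1$. The threshold $k=\omega(v^{1-\frac{1}{2i+1}})$ is precisely what the final extension step requires.

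For part~(ii), the plan is to greedily build a clique of size $m-i-2$ inside $\Gamma_i$ with large common neighborhood, and then invoke the Hoffman ratio bound. Set $W_0=V(\Gamma_i)$ and, as long as $W_{j-1}\neq\emptyset$, pick $a_j\in W_{j-1}$ of maximum $\Gamma_i$-degree in $W_{j-1}$, letting $W_j=N_{\Gamma_i}(a_j)\cap W_{j-1}$. Lemma~\ref{lem:eml} applied inside $\Gamma_i$ gives the recursion
\[
    |W_j|\geq|W_{j-1}|\tfrac{k_i}{v_i}+s_i\bigl(1-|W_{j-1}|/v_i\bigr)\geq|W_{j-1}|\tfrac{k_i}{v_i}-|s_i|,
\]
which unrolls, using $k_i/v_i=o(1)$ from (P1), to
\[
    |W_{m-i-2}|\geq v_i(k_i/v_i)^{m-i-2}-O(|s_i|).
\]
Since $\Gamma$ is $K_m$-free and $Y\cup\{a_1,\dots,a_{m-i-2}\}$ is an $(m-2)$-clique in $\Gamma$, no two vertices of $W_{m-i-2}$ can be adjacent in $\Gamma_i$; i.e.\ $W_{m-i-2}$ is an independent set in $\Gamma_i$. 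The Hoffman ratio bound then gives $|W_{m-i-2}|\leq v_i(-s_i)/(k_i-s_i)\leq v_i(-s_i)/k_i$, and combining with the lower bound and rearranging yields $-s_i\geq(1-o(1))k_i(k_i/v_i)^{m-i-2}$.

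The main obstacle is the possibility that the greedy process in (ii) collapses, i.e.\ that $|W_{j_0}|\leq 0$ for some $j_0\leq m-i-2$, so that either no next clique vertex can be picked or the Hoffman step gives nothing useful. However, such a collapse requires the error $O(|s_i|)$ in the recursion to already dominate the main term $v_i(k_i/v_i)^{j_0}$, and since the ratio $v_i(k_i/v_i)^{j_0}/[k_i(k_i/v_i)^{m-i-2}]=(v_i/k_i)^{m-i-1-j_0}$ tends to infinity (as $k_i/v_i=o(1)$ and $j_0\leq m-i-2$), a collapse already forces $-s_i=\omega(k_i(k_i/v_i)^{m-i-2})$, which strictly dominates the claimed lower bound. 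The analogous subtlety in (i) is keeping the $O(\sqrt{k})$ error in Lemma~\ref{lem:eml} negligible at every inductive step, which explains why the hypothesis $k=\omega(v^{1-\frac{1}{2i+1}})$ is the exact threshold and cannot be relaxed.
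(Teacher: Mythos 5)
Your part (i) is essentially the paper's argument: induct on $i$, pick a vertex of at least average degree in $\Gamma_{i-1}$, and control the new average degree via Lemma \ref{lem:eml}, with $k=\omega(v^{1-\frac{1}{2i+1}})$ keeping the $O(\sqrt{k})$ error term negligible. Your part (ii), however, takes a genuinely different route. The paper argues via the chain of local densities: since $\Gamma_{m-3}$ is triangle-free, there is a first level $j\leq m-3$ at which $\lambda_j<(D+o(1))k_j^2/v_j$ for some $D<1$; applying Lemma \ref{lem:eml} to the regular graph $\Gamma_i$ then lower-bounds $\lambda_j$ by $(1+o(1))k_i(k_i/v_i)^{j-i+1}+s_i$, and comparing the two bounds at the worst case $j=m-3$ forces $-s_i=\Omega(k_i(k_i/v_i)^{m-i-2})$. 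You instead greedily grow a clique of size $m-i-2$ inside $\Gamma_i$, observe that its common neighborhood $W_{m-i-2}$ must be independent by $K_m$-freeness, and play the expander-mixing lower bound on $|W_{m-i-2}|$ against the Hoffman ratio bound; your explicit treatment of the ``collapse'' case (where the $O(|s_i|)$ error swallows the main term at some earlier stage $j_0$, which only yields an even stronger bound on $-s_i$) closes the one gap such greedy arguments usually have. Both proofs are correct and rest on the same two pillars (regularity of $\Gamma_i$ and $k_i/v_i=o(1)$ from (P1)); note that the Hoffman bound you invoke is itself the $e=0$ case of Lemma \ref{lem:eml}, so your argument needs no outside input. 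Your version even gives the slightly cleaner constant $-s_i\geq(1-o(1))k_i(k_i/v_i)^{m-i-2}$ in place of the paper's $(1-D+o(1))$; what it does not produce is the intermediate level $j$ at which the density first drops, a byproduct of the paper's proof that is echoed later in the discussion preceding Proposition \ref{prop:opt_pseudo1}.
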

\begin{proof}
    First we show (i).
    For this, let $k_i$ 
    denote the average degree of $\Gamma_i$.
    Clearly, the claim is true for $i=0$.
    The condition $k = \omega(v^{1 - \frac{1}{2i{+}1}})$
    is equivalent to
    \begin{align}
        (1+o(1)) k \left( \tfrac{k}{v} \right)^{i} = \omega(\sqrt{k}).\label{eq:ki_bnd}
    \end{align}
    Suppose that the claim is true for $\Gamma_{i-1} = \Gamma(Y)$
    for some clique $Y$ of size $i-1$.
    Let $a$ be a vertex of $\Gamma_{i-1}$ of degree at least $k_{i-1}$.
    Take $\Gamma_i = \Gamma(Y \cup \{ a \})$.
    By Lemma \ref{lem:eml} applied to $\Gamma$, using Equation \eqref{eq:ki_bnd}, 
    the average degree $k_i$ of a vertex in $\Gamma_{i}$ satisfies
    \begin{align*}
        (1+o(1)) k \left(\tfrac{k}{v} \right)^{i} \leq \tfrac{k_{i-1}(k-s)}{v} + s \leq
        k_i \leq \tfrac{k_{i-1}(k-r)}{v} 
        + r = (1+o(1)) k_{i-1} \cdot \tfrac{k}{v}.
    \end{align*}
    This shows property (P1) for $\Gamma_{i}$.
    
    \medskip
    
    Next we show (ii).
    For some $j$ with $i \leq j \leq m-3$ we require 
    that $\lambda_j < (D+o(1)) \frac{k_j^2}{v_j}$ for some constant $D < 1$
    as $\Gamma$ is $K_m$-free.
    Similarly to the above, for the first $j$ 
    for which this occurs, 
    we find a $\Gamma_j$ with
    \[
      k_j = (1+o(1)) k_i \left(\tfrac{k_i}{v_i} \right)^{j-i}.
    \]
    By Lemma \ref{lem:eml}, applied to the regular graph $\Gamma_i$,
    \[
        (D+o(1)) \tfrac{k_j^2}{v_j} \geq \lambda_{j} \geq 
        (1+o(1)) k_i \left(\tfrac{k_i}{v_i} \right)^{j-i+1} + s_i.
    \]
    In the worst case is $j=m-3$ which yields the claim.
\end{proof}

\begin{Proposition}\label{prop:opt_pseudo1}
%     Let $i \geq \frac12 t - 1$.
    Suppose that $\Gamma$ has 
    $k = \allowbreak \omega(v^{1-\frac{1}{3m{-}2i{-}5}})$.
    Furthermore, suppose that $\Gamma_i$
    has property (P1), and 
    that $\Gamma_i$ has property (P2) or $i=m-3$.
    If $\Gamma_i$ is regular, then $\sigma_i = \Omega(k^{\frac12} \left( \tfrac{k}{v} \right)^{\frac32 m - 2 - i})$.
    If $\Gamma_i$ is also positive-$1$-flat $1$-walk-regular,
    then $\sigma_i = \Omega(k \left( \tfrac{k}{v} \right)^{\frac54 m + \frac34 - \frac14 i})$.
\end{Proposition}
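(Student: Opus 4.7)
The plan is to apply Proposition~\ref{prop:krein_bnd_for_asrg} to the approximately strongly regular graph $\Gamma_i$. This requires verifying three hypotheses: $\mu_i > \lambda_i$, $k_i = o(v_i)$, and $k_i = o((\mu_i - \lambda_i)^{3/2})$. The second is immediate from (P1); the first will fall out of the lower bound on $\mu_i - \lambda_i$ that I establish next.

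The key ingredient is the bound $-s_i = \Omega(k_i(k_i/v_i)^{m-i-2})$ supplied by Lemma~\ref{lem:clique-free_paras}(ii). If $\Gamma_i$ has property (P2), then the assumption $-s_i = O(\mu_i - \lambda_i)$ transfers this directly to
\[
 \mu_i - \lambda_i = \Omega\left(k_i(k_i/v_i)^{m-i-2}\right).
\]
If instead $i = m-3$, then $\Gamma_i$ is triangle-free (any triangle in $\Gamma_i$ together with the clique $Y$ of size $m-3$ would yield a $K_m$ in $\Gamma$), so $\lambda_i = 0$. Combined with $\mu_i = (1+o(1))k_i^2/v_i$ (valid since $k_i = o(v_i)$) and the fact that $m-i-2 = 1$ in this case, the same lower bound on $\mu_i - \lambda_i$ holds. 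In either branch $\mu_i - \lambda_i > 0$, so $\mu_i > \lambda_i$.

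Next I would translate $k_i = o((\mu_i - \lambda_i)^{3/2})$ into a condition on $k$ and $v$. Using the lower bound on $\mu_i - \lambda_i$, it reduces to $\sqrt{k_i}(k_i/v_i)^{\frac{3}{2}(m-i-2)} = \omega(1)$. Substituting $k_i \geq (1+o(1))k(k/v)^i$ and $k_i/v_i \geq (1+o(1))k/v$ from (P1) and collecting exponents yields $\sqrt{k}(k/v)^{\frac{3}{2}m - i - 3} = \omega(1)$, which a short algebraic computation shows to be equivalent to the standing hypothesis $k = \omega(v^{1 - 1/(3m-2i-5)})$.

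With all three hypotheses verified, Proposition~\ref{prop:krein_bnd_for_asrg} gives $\sigma_i \geq (1+o(1))(\mu_i - \lambda_i)^{3/2}/v_i$. Plugging in the lower bound on $\mu_i - \lambda_i$ yields $\sigma_i = \Omega(\sqrt{k_i}(k_i/v_i)^{\frac{3}{2}(m-i-2)+1})$. A final application of the (P1) estimates, noting that $\frac{i}{2} + \frac{3}{2}(m-i-2) + 1 = \frac{3}{2}m - i - 2$, produces the desired bound $\sigma_i = \Omega(\sqrt{k}(k/v)^{\frac{3}{2}m - 2 - i})$. The only point requiring real care is exponent bookkeeping: the threshold $k = \omega(v^{1-1/(3m-2i-5)})$ is calibrated exactly so that the Krein bound for ASRGs becomes applicable, and one must check that both branches of the case analysis ((P2) and $i = m-3$) deliver compatible lower bounds on $\mu_i - \lambda_i$.
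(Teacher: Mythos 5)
Your proof is correct and follows essentially the same route as the paper: lower-bound $\mu_i-\lambda_i$ via Lemma \ref{lem:clique-free_paras}(ii) together with (P2) (or via triangle-freeness when $i=m-3$), verify the hypotheses of Proposition \ref{prop:krein_bnd_for_asrg}, and then track exponents. Your version is in fact slightly more careful than the paper's, which writes $\sigma_i \geq (1+o(1))|\mu-\lambda|/v$ in the final display where the exponent $\tfrac32$ on $|\mu-\lambda|$ (as in Proposition \ref{prop:krein_bnd_for_asrg}, and as you correctly use) is evidently intended.
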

\begin{proof}
    If $\Gamma_i$ has property (P1) and is regular,
    then it is an approximately strongly regular graph 
    with parameters $(v_i, k_i, \lambda_i, \mu_i; \sigma_i)$
    with $\mu_i = (1+o(1)) \frac{k_i^2}{v_i^2}$ (as $k_i = o(v)$).
    If $\Gamma_{m-3}$ has property (P1), then $\lambda_{m-3} = 0$ and 
    $\mu_{m-3} = (1+o(1)) \frac{k_{m-3}^2}{v_{m-3}}$, 
    so $\Gamma_{m-3}$ has property (P2).
%     Hence, in the following we assume that $\Gamma_i$ has property (P2).
    
    \medskip 
    
    By Lemma \ref{lem:clique-free_paras}(ii) and property (P2), 
    \[
      \mu_i - \lambda_i = \Omega\left(k_i \left( \tfrac{k}{v} \right)^{m-i-2}\right).
    \]
    From $k = \omega(v^{1-\frac{1}{3m{-}2i{-}5}})$
    and $k_i \geq (1+o(1)) k \left(\frac{k}{v} \right)^i$, we obtain that
    $k_i = o(|\mu_i - \lambda_i|^{\frac{3}{2}})$.
    By Proposition \ref{prop:krein_bnd_for_asrg},
    \begin{align*}
        \sigma_i &\geq (1+o(1)) \frac{|\mu_i-\lambda_i|}{v_i}
        = \Omega\left(k^{\frac12} \left( \tfrac{k}{v} \right)^{\frac32 m - 2 - i}\right). \qedhere
    \end{align*}
    
    \smallskip 
    
    This shows the general case.
    The case with $\Gamma_i$ positive-$1$-flat $1$-walk-regular
    uses Proposition \ref{prop:krein_bnd_for_asrg_1walk} and 
    is otherwise similar.
\end{proof}

For $i=m-3$, we can also use the inertia bound.

\begin{Proposition}\label{cor:opt_Kt_bnds_inertia}
    Let $m \geq 5$. 
    Let $\Gamma_{m-3}$ be as in Proposition \ref{prop:opt_pseudo1}.
    If $\sigma_{m-3} = 
    \Omega\left(k^{\frac{1}{2}} \left( \tfrac{k}{v} \right)^{\frac12 m-\frac52}\right)$,
    then $k = O(v^{1 - \frac{1}{m+1}})$.
\end{Proposition}
\begin{proof}
    Apply Proposition \ref{prop:inertia_asrg}
    with $v = v_{m-3} = k_{m-2}$, $k = k_{m-3}$, and
    \[
     \mu-\lambda = \Omega\left( k \left( \tfrac{k}{v} \right)^{m-2} \right).
    \]
    Hence, with the chosen $\sigma$
    we cannot have a coclique of size $k_{m-3}$.
\end{proof}

For $i \leq \frac34 m - \frac32$, we have $k = \omega(v^{1-\frac{1}{2i{+}1}})$
in Proposition \ref{prop:opt_pseudo1}, so
we can apply Lemma \ref{lem:clique-free_paras}(i)
and see that there exists a $\Gamma_i$ with property (P1).
Hence, the case $i = \lfloor \frac34 m - \frac32 \rfloor$ 
is special.

\begin{Corollary}\label{cor:opt_Kt_bnds}
    Let $m \geq 5$. 
    Let $\Gamma_i$ be as in Proposition \ref{prop:opt_pseudo1}.
    \begin{enumerate}[(i)]
     \item 
        If $i=\frac34 m - \frac32$, 
        and $\sigma_{i} = o(k^{\frac12} \left( \tfrac{k}{v} \right)^{\frac34 m - \frac12})$,
        then $k = O(v^{1 - \frac{2}{3 m - 4}})$.
     \item If $i=m-3$ and $\sigma_{m-3} = o(k^{\frac12} \left( \tfrac{k}{v} \right)^{\frac12 m - \frac52})$, 
     then $k = O(v^{1 - \frac{1}{m+1}})$.
    \end{enumerate}
    If $\Gamma_i$ is also positive-$1$-flat $1$-walk-regular,
    then $\sigma_i = o(k \left( \tfrac{k}{v} \right)^{m-1})$, 
    respectively, $\sigma_i = o(k \left( \tfrac{k}{v} \right)^{\frac78 m-\frac{11}{8}})$
    suffice in (i), respectively, (ii).
\end{Corollary}
% \begin{proof}
%     This follows directly from Proposition \ref{prop:opt_pseudo1}.
% \end{proof}

% \subsection{Almost Generalized Quadrangles}

\subsection{SRGs as Counterexamples}

Glock, Janzer, and Sudakov ask in the conclusion of
\cite{GJS2021} for a family of clique-free 
strongly regular graphs with large $\lambda$ 
to disprove several conjectures in extremal
combinatorics. To our knowledge no such graph 
is known. Approximately strongly regular graphs
with small $\sigma$ are equally suitable for 
this task.

\section{Future Work}

There are countless results specific to
strongly regular graphs. Generalizing them
to approximately strongly regular graphs seems to
be a worthwhile endeavor. 

\smallskip 

Maybe one can improve the bounds given here:
our variant of the absolute bound,
Proposition \ref{prop:absolute_bnd_for_asrg}, is not very
satisfying compared to our Krein bounds.

{\medskip \footnotesize 
Bounds on approximately 
equiangular lines might be helpful here.
This is not a completely new topic, for instance,
constructions for almost equiangular lines 
were investigated in \cite{BO2015}.
\par} 

\smallskip 

While anything the author tries 
to construct will usually satisfy the conditions
of Proposition \ref{prop:krein_1walk} or
Proposition \ref{prop:krein_bnd_for_asrg_1walk}
(or be very close to it),
the setup seems overly technical and hard to verify
compared to Proposition \ref{prop:krein_regular}
and Proposition \ref{prop:krein_bnd_for_asrg}.
Maybe it can be simplified.
There is also the question if our results --
using the usual connections between caps,
strongly regular graphs, and linear codes -- 
has any interesting implications for coding theory.

\smallskip 

One can ask several existence questions, for instance:
\begin{enumerate}[(i)]
 \item For given $(v, k, \lambda, \mu)$, what is the smallest $\sigma$
 such that an approximately strongly regular graph 
 with parameters $(v, k, \lambda, \mu; \sigma)$ exists?
 \item For a given strongly regular graph with parameters 
 $(v, k, \lambda, \mu)$, what is the smallest $\sigma$
 such that an approximately strongly regular graph 
 with parameters $(v, k, \lambda, \mu; \sigma)$ exists
 which is not strongly regular?
\end{enumerate}

{\medskip \footnotesize 
Allen W. Herman suggested (ii) and some variants.
These questions and also some examples in \S\ref{sec:list_examples} 
suggest that a nonregular version of approximately 
strongly regular graphs might be interesting.

More formally, for a vertex $a$, let $k_a$ denote its degree.
Call a graph $\Gamma$ {\it nonregular
approximately strongly regular} with parameters $(v, k, \lambda, \mu; \sigma)$ 
the same as for an approximately strongly regular graph,
except that we no longer require $\Gamma$ to be $k$-regular,
just $k = \frac1v \sum_a k_a$ and 
$\Var(k_a) := \frac1v \sum_a (k_a - k)^2 \leq \sigma^2$.
It might be interesting to investigate 
how to make a nonregular approximately
strongly regular graph regular without 
changing its parameters too much.
\par}

\smallskip 

Our primary motivation for this document is to 
restrict the search space when looking for constructions
for specific extremal problems.
Maybe the techniques in this paper can be expanded
to obtain more general bounds on caps and
optimally pseudorandom clique-free graphs.
At the time of writing, the author holds the weak belief
that Corollary \ref{cor:caps_bnds} and 
Corollary \ref{cor:opt_Kt_bnds} state true upper
bounds for the respective general cases.

\bigskip
{\footnotesize
\paragraph*{Acknowledgments}
The author is supported by a 
postdoctoral fellowship of the Research Foundation -- Flanders (FWO).

\smallskip

The author thanks 
Andries E. Brouwer for several suggestions, particularly, 
for the term {\it approximately strongly regular graphs},
Noga Alon, Sebastian M. Cioab\u{a}, David Conlon,
Maar\-ten De Boeck, Shaun Fallat, Dion Gijswijt, 
Gary Greaves, Allen W. Herman, 
Sam Mattheus, Karen Meagher,
and Padraig \'{O} Cath\'{a}in for comments and remarks,
and Jacques Verstra\"ete
for hosting the author for the last two months in 2021
and discussing caps and clique-free graphs.\par}

\end{document}